\newtheorem{theorem}{Theorem}
\newtheorem{proposition}[theorem]{Proposition}
\newtheorem{definition}[theorem]{Definition}
\def\di{\displaystyle}
\newcommand{\N}{\mathbb{N}}
\newcommand{\R}{\mathbb{R}}
\begin{document}
\title[Validating Stochastic Models]{Validating Stochastic Models: Invariance Criteria for Systems of Stochastic Differential Equations and the Selection of a Stochastic Hodgkin-Huxley Type Model}
\author{Jacky Cresson$^{1,2}$, B\'en\'edicte Puig$^1$ and Stefanie Sonner$^{3,4\ *}$}
\thanks{* \textit{corresponding author} \\
Address: BCAM Basque Center for Applied Mathematics, Alameda de Mazarredo 14,  48009 Bilbao, Basque Country, Spain,
Tel.: +34 946 567 842,
Fax: +34 946 567 843,
E-mail: ssonner@bcamath.org}
\subjclass{60H10, 60H30, 65C30, 92B05}
\keywords{Invariance Criteria, Stochastic Differential Equations, Validation of Stochastic Models, Stochastic Hodgkin-Huxley Model}
\maketitle

\vskip 5mm
\begin{tiny}
\begin{enumerate}
\item {Laboratoire de Math\'ematiques Appliqu\'ees de Pau, Universit\'e de Pau et des Pays de l'Adour, avenue de l'Universit\'e, BP 1155, 64013 Pau Cedex, France}

\item {SYRTE UMR CNRS 8630, Observatoire de Paris and University Paris VI, France}

\item {BCAM Basque Center for Applied Mathematics, Alameda de Mazarredo 14,  48009 Bilbao, Basque Country, Spain}

\item {Helmholtz Zentrum M\"unchen, Institut f\"ur Biomathematik und Biometrie, Ingolst\"adter Landstrasse 1, 85764 Neuherberg, Germany}
\end{enumerate}
\end{tiny}
\vskip 5mm

\begin{abstract}
In recent years, many difficulties appeared when taking into account the 
inherent stochastic behavior of neurons and voltage-dependent ion channels in Hodgking-Huxley type models.
In particular, an open problem for a stochastic model of cerebellar granule cell excitability was to ensure
that the values of the gating variables remain within the unit interval. 
In this paper, we provide an answer to this modeling issue and obtain a class of viable stochastic models.
We select the stochastic models thanks to a general criterion for the 
flow invariance of rectangular subsets under systems of stochastic differential equations.
We formulate 
explicit necessary and sufficient conditions, 
that  are valid for both, It\^o's and Stratonovich's interpretation of stochastic differential equations,
improving a previous result obtained by A. Milian [A.Milian, Coll. Math. 1995] in the It\^o case. 
These invariance criteria allow to validate stochastic models in many applications.
To illustrate our results we present numerical simulations for a stochastic Hodgkin-Huxley model. 
\end{abstract}

\section{Introduction}

In recent years, a great deal of activities has been devoted to develop models 
of neuronal excitability that take into account the intrinsic stochastic bioelectrical activity 
of neurons (see \cite{saarinen2008}, \cite{fox}). In \cite{saarinen2008}, the authors apply
It\^o's theory of stochastic differential equations and propose a stochastic model which reproduces 
the irregular electrophysiological activity of an in vitro granule cell (see \cite{saarinen2008} and Figure 6 p.7). 
A particular case of this model is a stochastic version of the classical Hodgkin-Huxley model 
(see \cite{hodgkin-huxley-1952},  \cite{fox}). 
However, as already indicated by the authors, the model suffers severe difficulties 
(\cite{saarinen2008} p.4 and p.10): Undesired values were observed for the gating variables 
that are supposed to take values 
within the unit interval. The solution of 
this modeling difficulty is mentioned as a challenge for future work (\cite{saarinen2008} p.10).
Similar problems also occurred  for the stochastic Hodgkin-Huxley model  in \cite{fox} (see p.2071).
In this article, we provide an answer to this problem and obtain a
family of viable stochastic models for cerebellar granule cell excitability. 
The admissible models are derived from 
a general invariance theorem for systems of stochastic differential equations.\\

We formulate invariance results 
 in a general setting that  allow to validate 
stochastic models in many applications.
When
the solutions of a given system of stochastic differential equations  describe 
quantities that necessarily take values within a certain range, the  
problem can be mathematically analyzed by studying the flow invariance of 
rectangular subsets of the euclidean space. 
To be more precise, we consider systems of It\^o differential equations of the form
$$dX(t)=f(t,X(t)) dt+g(t,X(t))dW(t),
$$
where the process $X$ is vector-valued. For the concrete formulation we refer to Section \ref{main-results}.
We characterize the class of functions $f$ and $g$ that 
lead to viable stochastic models and formulate explicit necessary and sufficient conditions that can be directly checked and easily verified in applications.
We further show that the invariance theorems are valid for 
both,  It\^o's and Stratonovich's interpretation of stochastic differential equations. 
The result for the It\^o case was previously obtained by A. Milian in \cite{milian-1995}. 
As discussed in \cite{turelli-1977}, in a concrete application 
it is generally not easy to decide which interpretation should be applied. 
Our results show that the qualitative behavior of solutions regarding non-negativity and boundedness 
is independent  
of It\^o's or Stratonovich's interpretation. 
Other properties of the solutions, however, may strongly depend on the choice of the interpretation (see \cite{oksendal-2003}).  
\\

The outline of our paper is as follows:
In Section \ref{main-results} we introduce the class of stochastic systems we study and formulate general 
invariance criteria for systems of stochastic differential equations. 
We then apply the results  to obtain viable stochastic models for cerebellar granule cell excitability in Section \ref{application}. 
In Section \ref{simulations} we present numerical simulations to 
illustrate the model behaviour.
Finally, in Section \ref{proofs} we recall the  results obtained by A. Milian in \cite{milian-1995} and 
present the proofs of the invariance theorems.
\\

\section{Invariance Criteria for Stochastic Differential Equations}\label{main-results}

Let $(\Omega , \mathcal{F},P)$ be  a probability space with a 
right-continuous increasing family $F=(\mathcal{F}_t )_{t\geq 0}$ of sub-$\sigma$-fields of $\mathcal{F}$ each containing all sets of $P$-measure zero. We consider systems of stochastic It\^o equations of the form
\begin{equation}
\label{stochastic}
X(t) = X_0 +\di\int_{t_0}^t f(s,X(s) ) ds +\int_{t_0}^t g(s,X(s) ) dW(s), \qquad\quad t\in[t_0,\infty[,
\end{equation}
where $f=[f_i]:[0,\infty[\times \R^m\rightarrow \R^m$ is Borel-measurable, and 
$g=[g_{ij}]:[0,\infty[\times \R^m\rightarrow \R^{m\times r}$ is a Borel-measurable mapping into the set of all $\R^{m\times r}$-matrices,
$i=1,\dots,m, j=1,\dots,r$. 
Furthermore, $W : [0,\infty[\times \Omega \rightarrow \R^r$ denotes an $r$-dimensional $F$-Wiener process, 
the initial time $t_0$ is non-negative and $X_0\in\R^m$
is the given initial data. \\

The stochastic  integral equations (\ref{stochastic}) are commonly written as system of stochastic It\^o differential equations, 
\begin{eqnarray*}
dX(t) &=&  f(t,X(t) ) dt + g(t,X(t) ) dW(t), \qquad\quad t\in[t_0,\infty[,\\
X(t_0)&=&X_0,
\end{eqnarray*}
where the function $g$ represents the stochastic perturbation and $f$ the deterministic part. 
Indeed, if $g\equiv0$ we obtain the corresponding unperturbed 
deterministic system of ODEs. 
\\

In the sequel, we denote by $(f,g)$ stochastic initial value problems of the form (\ref{stochastic}). 
We aim at formulating explicit necessary and sufficient conditions on the 
functions $f$ and $g$ for the non-negativity and boundedness of 
solutions. 
The conditions can directly be verified and allow to explicitly characterize 
the class of admissible models in applications. In Section \ref{proofs} we deduce our main theorems from a 
more general result about the stochastic invariance of polyhedral subsets of $\R^m$. 
However, in applications the non-negativity and boundedness of solutions are the most relevant 
modeling issues.\\

Since our aim is not to establish the well-posedness of the stochastic initial value problem 
but to study the qualitative behavior of solutions, we assume that  for every
initial time $t_0\geq 0$ and initial data $X_0\in\R^m$ there exists
a unique solution of the stochastic problem  (\ref{stochastic}).
\\

\begin{definition}
We say that the subset $K\subset \R^m$ is \textbf{ invariant} for the stochastic system $(f,g)$ 
if for every initial data $X_0\in K$ and initial time $t_0\geq 0$ the corresponding solution $X(t)$, $t\geq t_0$, satisfies
$$P\left(\left\{X(t)\in K,\ t\in[t_0,\infty[\right\}\right)=1.
$$
\end{definition}
\vspace*{3ex}

The following theorem characterizes the class of functions $f$ and $g$ such that the stochastic system $(f,g)$
preserves the positivity of solutions. That is, solutions emanating from non-negative initial 
data (almost surely) remain non-negative as long as they exist. \\

\begin{theorem}\label{mainA}
Let $I\subset\{1,\dots,m\}$ be a non-empty subset. Then, the 
set 
$$
K^+:=\{x=(x_1,\dots,x_m)\in\R^m, \ x_i\geq 0,\, i\in I\}
$$ 
is invariant for the
stochastic system $(f,g)$ if and only if
\begin{eqnarray*}
f_i (t,x) &\geq & 0 \qquad \textnormal{\emph{for}}\  x\in K^+\ \textnormal{\emph{such that}}\ x_i=0, \nonumber \\
g_{i,j} (t,x) &= &0 \qquad  \textnormal{\emph{for}}\  x\in K^+\ \textnormal{\emph{such that}}\ x_i=0,\  j=1,\dots,r,\nonumber
\end{eqnarray*} 
for all $t\geq 0$ and $i\in I$. 

This result applies independent of It\^o's or Stratonovich's interpretation of stochastic differential equations.
\end{theorem}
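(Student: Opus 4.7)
The plan is to reduce Theorem \ref{mainA} to Milian's general polyhedral invariance criterion (recalled in Section \ref{proofs}) by noting that $K^+$ is the intersection of the closed half-spaces $H_i=\{x\in\mathbb{R}^m:x_i\geq 0\}$ for $i\in I$. Each $H_i$ has outward unit normal $-e_i$, so Milian's conditions on the face $\{x\in K^+:x_i=0\}$ become exactly $\langle -e_i,f(t,x)\rangle\leq 0$, i.e.\ $f_i(t,x)\geq 0$, and $\langle -e_i,g_{\cdot j}(t,x)\rangle=0$, i.e.\ $g_{ij}(t,x)=0$ for every $j$. Intersecting these over $i\in I$ yields necessity and sufficiency for It\^o's interpretation in one stroke.

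For Stratonovich's interpretation, I would rewrite the system in equivalent It\^o form with modified drift
$$\tilde f_i(t,x)=f_i(t,x)+\frac{1}{2}\sum_{j=1}^{r}\sum_{k=1}^{m}g_{kj}(t,x)\,\partial_{x_k}g_{ij}(t,x),$$
under the usual $C^1$ regularity of $g$ required for the conversion. The crucial observation is that the hypothesis $g_{ij}(t,x)=0$ on $\{x\in K^+:x_i=0\}$ kills the It\^o--Stratonovich correction on that same face: at relative-interior points of the face, $g_{ij}(t,\cdot)$ is identically zero in every tangential direction $x_k$ with $k\neq i$, hence $\partial_{x_k}g_{ij}=0$ for $k\neq i$; the remaining term $k=i$ carries the factor $g_{ij}(t,x)=0$. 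By continuity this extends to the whole face, so $\tilde f_i=f_i$ whenever $x_i=0$. Applying the already-established It\^o criterion to the pair $(\tilde f,g)$ then yields the same system of inequalities and equalities, showing that the criterion is independent of the interpretation.

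The main obstacle is not the reductions above but the underlying polyhedral invariance result itself. Its proof (to be recalled from \cite{milian-1995}) is the nontrivial stochastic-analytic core: one approximates the signed distance to the boundary of $K^+$ by smooth functions, applies It\^o's formula, and uses exit-time arguments together with local martingale bounds to transfer the pointwise infinitesimal conditions on $f$ and $g$ into an almost-sure global statement that the process remains in $K^+$. Necessity is obtained conversely by exit-time estimates at a hypothetical boundary point where one of the conditions fails, producing a positive probability of exit. Once this machinery is in place, the specialization to the polyhedral set $K^+$ and the passage between It\^o and Stratonovich become routine calculus.
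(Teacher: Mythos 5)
Your proposal is correct and follows essentially the same route as the paper: write $K^+$ as an intersection of half-spaces $H_{0,e_i}$, invoke Milian's polyhedral viability criterion (cited, not reproved) for the It\^o case, and handle Stratonovich via the conversion to the modified drift $\hat f = f + \tfrac{1}{2}h$, showing the correction term $h_i$ vanishes on each face $\{x\in K^+: x_i=0\}$ so that the conditions on $(\hat f,g)$ and $(f,g)$ coincide. The only cosmetic difference is in how that vanishing is established: the paper uses the factorization $g_{ij}(t,x)=x_i\int_0^1 \partial_{x_i} g_{ij}(t,x_1,\dots,sx_i,\dots,x_m)\,ds$, whereas you split the sum into tangential derivatives (zero because $g_{ij}$ vanishes identically on the face) and the normal term (carrying the factor $g_{ij}=0$); the two arguments are interchangeable.
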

\vspace*{2ex}

The solutions of mathematical models often describe quantities that necessarily take values within a particular range. 
We next formulate a criterion for 
the invariance of rectangular subsets
of the phase space $\R^m$.

\begin{theorem}\label{mainB}
Let $I\subset\{1,\dots,m\}$ be a non-empty subset and $a_i,b_i\in\R$ such that $b_i>a_i$. Then, the 
set 
$$
K:=\{x\in\R^m :\ a_i\leq x_i\leq b_i,\, i\in I\}
$$ 
is invariant for the
stochastic system $(f,g)$ if and only if
\begin{eqnarray}
f_i (t,x) &\geq & 0 \qquad \textnormal{\emph{for}}\  x\in K\ \textnormal{\emph{such that}}\ x_i=a_i, \nonumber \\
f_i (t,x) &\leq & 0  \qquad   \textnormal{\emph{for}}\  x\in K\ \textnormal{\emph{such that}}\ x_i=b_i,  \nonumber \\
g_{i,j} (t,x) &= &0 \qquad  \textnormal{\emph{for}}\  x\in K\ \textnormal{\emph{such that}}\ x_i\in\{a_i,b_i\},\  j=1,\dots,r,\nonumber
\end{eqnarray}
for all $t\geq 0$ and $i\in I$. 

This result is valid independent of It\^o's or Stratonovich's interpretation.
\end{theorem}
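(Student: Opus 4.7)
The plan is to derive Theorem~\ref{mainB} from Theorem~\ref{mainA} by two affine changes of coordinates together with a projection--extension argument. First I would decompose
$$
K=K^-\cap K^+,\qquad K^-:=\{x:x_i\ge a_i,\ i\in I\},\quad K^+:=\{x:x_i\le b_i,\ i\in I\},
$$
so that the desired invariance of $K$ reduces to simultaneous invariance of $K^-$ and $K^+$ for trajectories starting in $K$.

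For the lower bound I would introduce the translated process $Y(t):=X(t)-\alpha$, where $\alpha_i=a_i$ for $i\in I$ and $\alpha_i=0$ otherwise. Then $Y$ satisfies an It\^o equation with drift $\tilde f(t,y):=f(t,y+\alpha)$ and diffusion $\tilde g(t,y):=g(t,y+\alpha)$, and invariance of $K^-$ for $X$ is equivalent to invariance of $\{y:y_i\ge 0,\ i\in I\}$ for $Y$, which is exactly the setting of Theorem~\ref{mainA}. For the upper bound I would use the reflection $Z(t):=\beta-X(t)$ with $\beta_i=b_i$ on $I$ and $\beta_i=0$ otherwise; since $-W$ remains an $F$-Wiener process, $Z$ solves an SDE with drift $-f(t,\beta-z)$ and diffusion $-g(t,\beta-z)$, and invariance of $K^+$ for $X$ becomes invariance of $\{z:z_i\ge 0,\ i\in I\}$ for $Z$. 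Both reductions are purely affine, hence produce no Stratonovich correction; this is what makes the argument interpretation-free.

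The principal obstacle is that the hypotheses of Theorem~\ref{mainB} are stated on $K$, whereas applying Theorem~\ref{mainA} to the translated (resp.\ reflected) problem requires the boundary conditions on the whole of $K^-$ (resp.\ $K^+$). To bridge this in the \emph{sufficiency} direction, I would extend $f$ and $g$ outside $K$ via the coordinatewise projection $\Pi$ onto $K$ defined by $\Pi(x)_i:=\max(a_i,\min(b_i,x_i))$ for $i\in I$ and $\Pi(x)_i:=x_i$ otherwise, setting $\hat f(t,x):=f(t,\Pi x)$ and $\hat g(t,x):=g(t,\Pi x)$. These modified coefficients agree with $f,g$ on $K$, and on any face $\{x_i=a_i\}\cap K^-$ one has $\Pi x\in K$ with $(\Pi x)_i=a_i$, so the hypotheses of Theorem~\ref{mainA} hold for $\hat f,\hat g$ on the translated half-space (and analogously on the upper face). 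Theorem~\ref{mainA} then yields invariance of $K^-$ and $K^+$ for the modified system; since $\hat f,\hat g$ coincide with $f,g$ on $K$, pathwise uniqueness forces the original solution to equal the modified one, which never leaves $K^-\cap K^+=K$. For the converse, since the conditions are purely local on the faces of $K$, they are inherited directly from the necessity parts of Theorem~\ref{mainA} applied to the translated and reflected problems, and no extension is needed.
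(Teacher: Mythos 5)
Your sufficiency argument is sound and is a genuinely different route from the paper's: the paper does not pass through Theorem \ref{mainA} at all, but writes $K$ directly as a finite intersection of half-spaces $H_{a,n}$ and reads \emph{both} directions of the equivalence off Milian's viability theorem for polyhedra (Theorem \ref{mil1}), handling Stratonovich afterwards via the conversion $\hat f=f+\tfrac12 h$ together with a factorization of $g_{ij}$ that vanishes on the faces. Your projection--extension $\hat f=f\circ\Pi$, $\hat g=g\circ\Pi$, combined with the uniqueness/localization step identifying the original solution with the modified one, is a correct way to upgrade hypotheses stated only on $K$ to hypotheses on the full half-spaces $K^-$ and $K^+$, and your observation that the affine reductions commute with both stochastic calculi is a legitimate (and arguably cleaner) reason why the interpretation-independence of Theorem \ref{mainB} follows from that of Theorem \ref{mainA}. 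Note only that you tacitly need existence and uniqueness for the auxiliary system $(\hat f,\hat g)$ in order to invoke Theorem \ref{mainA} for it and to produce the solution $\hat X$; under the paper's standing assumptions this is an additional (though mild, since $\Pi$ is $1$-Lipschitz) hypothesis.

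The genuine gap is in the necessity direction. The necessity half of Theorem \ref{mainA}, applied to the translated problem, says: \emph{if} $K^-=\{x:x_i\ge a_i,\ i\in I\}$ is invariant, \emph{then} the boundary conditions hold on its faces. Its contrapositive only yields that a violation of the conditions at some face point makes \emph{some} trajectory, started \emph{somewhere} in $K^-$, exit $K^-$ with positive probability --- and that starting point need not lie in $K$. Invariance of $K$ does not imply invariance of $K^-$ or of $K^+$ (trajectories launched from $K^-\setminus K$ are completely unconstrained by your hypothesis), so the conditions are not ``inherited directly.'' Nor can the projection trick repair this: proving that $K^-$ is invariant for $(\hat f,\hat g)$ would already require the face conditions you are trying to derive, which is circular. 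What is actually needed is the local statement that a failure of $f_i\ge 0$ or of $g_{ij}=0$ at a \emph{given} point $x^*\in K$ with $x_i^*\in\{a_i,b_i\}$ forces the solution started at $x^*$ itself to cross that face with positive probability; this is precisely what Theorem \ref{mil1}, applied to the polyhedron $K$ itself, delivers, and it is why the paper invokes Milian's result for general polyhedra instead of reducing to the positive cone.
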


If we apply Theorem \ref{mainA} or Theorem \ref{mainB}
to the corresponding unperturbed deterministic system $(f,0)$ we recover the well-known tangential 
condition for systems of ODEs, which is necessary and sufficient for the flow invariance of subsets
of  $\R^m$ (see \cite{Walter} or \cite{Pavel}).\\

Finally, we formulate a criterion for the validity of comparison principles for the solutions of stochastic systems.
A. Milian stated the following theorem for systems of It\^o equations in \cite{milian-1995}. We recall her result and 
show that it remains valid if we apply Stratonovich's interpretation of stochastic differential equations.\\

\begin{theorem}\label{mainC}
Let $I$ be a non-empty subset of $\{1,\dots,m\}$. We assume that $(f,g)$ and $(\tilde f,\tilde g)$ are
stochastic systems of the form $(\ref{stochastic})$ with   given initial data 
 $X_0,Y_0\in\R^m$, and denote by $X$ and $Y$ the corresponding solutions.
Then, the following
statements are equivalent:
\begin{itemize}
\item[(a)] For all $t_0\geq 0$ and $i\in I$, if the initial data satisfy $(X_0)_i\geq (Y_0)_i$, then
$$P\left(\left\{X_i(t)\geq Y_i(t),\ t\in[t_0,\infty[, i\in I\right\}\right)=1.
$$

\item[(b)] The functions $f, \tilde f,g$ and $\tilde g$ satisfy
\begin{eqnarray*}
f_i(t,x)&\geq &\tilde f_i(t,y),\ \quad \qquad t\geq 0,\\
g_{ij}(t,x)&=&\tilde g_{ij}(t,y),\quad \qquad t\geq 0,\ j=1,\dots,r,
\end{eqnarray*}
for all  $i \in I$ and  $x,y\in\R^m$ such that  $x_i=y_i$ and $x_k\geq y_k$ for $k\in I$.\\
\end{itemize}
 \end{theorem}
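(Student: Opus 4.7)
The plan is to reduce Theorem \ref{mainC} to the positivity invariance criterion already established in Theorem \ref{mainA} by passing to the joint process and performing a linear change of coordinates. Without loss of generality we may assume $X$ and $Y$ are defined on the same filtered probability space and driven by the same $r$-dimensional Wiener process $W$ (the conditions in (b) are pointwise in $(t,x,y)$, so this does not affect the equivalence). Then $Z := (X,Y)$ takes values in $\R^{2m}$ and solves the stochastic system
\begin{equation*}
dZ(t) = F(t,Z(t))\,dt + G(t,Z(t))\,dW(t),
\end{equation*}
where $F(t,(x,y)) = (f(t,x),\tilde f(t,y))$ and $G(t,(x,y))$ is the vertical stacking of $g(t,x)$ and $\tilde g(t,y)$. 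Statement (a) is exactly the flow invariance, for this joint system, of the polyhedral set $\{(x,y)\in\R^{2m}\, :\, x_i - y_i \geq 0,\ i\in I\}$.

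Next I would apply the linear change of variables $U_i := X_i - Y_i$ for $i\in I$, keeping all other $2m-|I|$ coordinates unchanged. Since the transformation is affine, It\^o's formula yields no second-order correction, so the transformed process solves an SDE of the same form \eqref{stochastic} on $\R^{2m}$; the same is true in Stratonovich's formalism, where affine changes of variables preserve the equation trivially. In the new coordinates the drift component in the $U_i$ direction is $f_i(t,x) - \tilde f_i(t,y)$ and the diffusion component is $g_{ij}(t,x) - \tilde g_{ij}(t,y)$. The comparison region becomes the positivity region $\{U_i \geq 0,\ i\in I\}$, and Theorem \ref{mainA} (applied to the transformed system on $\R^{2m}$) characterizes its invariance by: whenever $U_i = 0$ and $U_k \geq 0$ for $k\in I$ (equivalently, $x_i = y_i$ and $x_k \geq y_k$ for $k\in I$), the $U_i$-drift is $\geq 0$ and the $U_i$-diffusion coefficients vanish. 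Translating back yields precisely the inequalities and equalities in (b).

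This gives both implications at once and in both interpretations of the stochastic integral. The reader-facing subtlety I would address explicitly is that Theorem \ref{mainA} needs to be applicable to the enlarged system with the constraint only on the differences $x_i - y_i$, which requires the positivity criterion to be stated for an arbitrary linear projection onto a subset of coordinates — this is exactly the form of Theorem \ref{mainA} after the change of variables, since the preserved coordinates play no role in the invariance condition. The step requiring the most care is verifying that the change of variables is compatible with both stochastic calculi and that the joint-system construction does not introduce spurious coupling through the Wiener process; once this is handled, no further probabilistic argument is needed, as everything is carried by Theorem \ref{mainA}.
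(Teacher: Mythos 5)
Your proposal is correct in outline, but it takes a genuinely different route from the paper. The paper disposes of the It\^o case by quoting Milian's comparison theorem (Theorem \ref{mil2}) verbatim, and then treats Stratonovich's interpretation by converting to the modified It\^o systems with drift $\hat f=f+\tfrac12 h$ and checking that the boundary conditions in (b) are unaffected by the correction term $h$. You instead re-derive the comparison statement from the positivity criterion of Theorem \ref{mainA}: pass to the doubled system for $Z=(X,Y)$ on $\R^{2m}$, perform the affine change of variables $U_i=X_i-Y_i$ for $i\in I$, and read off the invariance conditions for the orthant $\{U_i\geq 0,\ i\in I\}$. This is essentially the mechanism by which Milian deduces her Theorem \ref{mil2} from her Theorem \ref{mil1}, so you are re-proving the cited ingredient rather than invoking it; what your route buys is that the Stratonovich case comes for free from the Stratonovich half of Theorem \ref{mainA} (an affine change of variables carries no It\^o--Stratonovich correction), whereas the paper must argue separately that $h$ does not disturb condition (b). Two caveats. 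First, your ``without loss of generality $X$ and $Y$ are driven by the same Wiener process'' is not a harmless normalization, and the justification offered (that (b) is pointwise) does not support it: if the two systems were driven by independent noises the theorem would be false (take $m=r=1$, $f=\tilde f=0$, $g=\tilde g=1$; then (b) holds but $X-Y$ is a Brownian motion and leaves the half-line). The shared driving noise is part of the setup of the systems of the form (\ref{stochastic}) and should simply be stated as such. Second, Theorem \ref{mainA} is applied under standing hypotheses (unique global solutions for \emph{all} initial data, and the regularity of the diffusion needed for the Stratonovich conversion and for the representation (\ref{aux_g})); these are inherited by the doubled and linearly transformed system from $(f,g)$ and $(\tilde f,\tilde g)$, but that verification deserves a sentence rather than being folded into the remark about ``spurious coupling.''
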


\section{The Validation of a Stochastic Hodgkin-Huxley Type Model}\label{application}

A stochastic model for  cerebellar granule cell excitability was proposed and numerically studied 
in \cite{saarinen2008}.
As many biophysical models of neurons it is based
on the well-known deterministic Hodgkin-Huxley formalism \cite{hodgkin-huxley-1952}, which qualitatively describes the 
conduction and excitation in nerves.
Such models are commonly formulated as systems of deterministic ODEs.
The behavior of neurons and voltage-dependent ion channels, however, is known to be stochastic in nature,
which motivates the modeling approach in \cite{saarinen2008}.
The mathematical model  is formulated as system of stochastic differential equations for the dependent model variables 
$x_i$, which represent the gating variables for the specific ion channels, 
the transmembrane potential $V$ and the intracellular calcium concentration $C$,
\begin{align}\label{model}
dx_i & =  f_i (V,x_i ) dt +\sigma_i\, dW_{i}(t) , &&i=1,\dots ,8,\\
dx_9& =  f_9 (V,C,x_9 ) dt +\sigma_9\, dW_9(t) ,\nonumber\\
dV & =  F(t,V,x_1,\dots,x_9) dt,\nonumber\\
dC & =  G(V,C,x_7,x_8) dt,\nonumber
\end{align}
where  the reaction functions in the equations for the gating variables are given by
\begin{align*}
&f_i (V,x_i )\ =\alpha_i (V) (1-x_i ) -\beta_i (V) x_i, && i=1,\dots,8,\\
&f_9(V,C,x_9)= \alpha_9 (V,C) (1-x_9) -\beta_9 (V,C) x_9,
\end{align*}
and the rate functions for activation $\alpha_i$ and inactivation $\beta_i$ are continuous and positive.  The stochastic differential equations are 
interpreted in the sense of It\^o,
$W_i(t), t\geq 0,$ denote standard scalar Wiener processes, $dW_i$ the corresponding It\^o differentials,
and the parameters $\sigma_i$ are positive and constant, $i=1,\dots,9$.
For the concrete form of the interaction functions $F$ and $G$ and the complete description of the model we refer to \cite{saarinen2008}. 
\\

This model extends a previous deterministic model for cerebellar granule cell excitability by adding the 
stochastic terms $\sigma_i\,dW_i(t)$ in the governing equations for the gating variables $x_i$, $i=1,\dots,9$. 
Ion channel stochasticity has been detected experimentally and is due to the thermal interaction of molecules constituting an ion channel. 
It can be observed as random opening and closing of an ion channel at an experimentally fixed membrane potential (see \cite{saarinen2008} and also \cite{fox}).\\

The gating variables $x_i$ describe the opening and closing rates of the specific ion channels and necessarily take values 
within the interval $[0,1]$.
While the corresponding unperturbed deterministic model, where $\sigma_i=0, i=1,\dots,9$, certainly ensures this property, 
it cannot be guaranteed by the stochastic model (\ref{model}):

The parameters $\sigma_i$, $i=1,\dots,9$, which take into account the intensity of the stochastic perturbations, 
were taken to be constant in the model and the simulations presented in \cite{saarinen2008}. The necessity 
to carefully choose these parameters was indicated. 
In particular, 
undesired values of the gating variables were observed and discussed, it was stressed that this modeling 
issue needed to be solved and highlighted as a challenge for future work
(see \cite{saarinen2008}, p.4 and p.10).
Similar difficulties also occurred for the stochastic Hodgkin-Huxley model developed in \cite{fox} (see p.2071).
Our results show that independent of the choice of the parameters $\sigma_i$
the invariance of the unit interval 
cannot be guaranteed by the stochastic model (\ref{model}) if we take these parameters to be constant. 
Indeed, the conditions on the stochastic perturbations in Theorem \ref{mainB} applied to the 
model (\ref{model}) and the invariant subset 
$$
\widetilde K=\{y\in\R^{11},\  0\leq y_i\leq 1,\, i=1,\dots,9\}
$$ 
are never satisfied.\\

We obtain viable stochastic models if we
replace the constants $\sigma_i$ by appropriate functions $g_i$, that ensure the desired invariance of the unit interval.
To be more precise, we propose to consider models of the form
\begin{align}\label{model2}
dx_i & =  f_i (V,x_i ) dt +g_i(t,V,C,x)\, dW_{i}(t) , &&i=1,\dots ,8,\\
dx_9& =  f_9 (V,C,x_9 ) dt +g_9(t,V,C,x)\, dW_9(t) ,\nonumber\\
dV & =  F(t,V,x) dt,\nonumber\\
dC & =  G(V,C,x_7,x_8) dt,\nonumber
\end{align}
where $x=(x_1,\dots,x_9)$, and the stochastic perturbations $g_i:[0,\infty[\times \R^{11}\rightarrow\R$ satisfy
$$g_i(t,y)=0\qquad\quad\textnormal{for}\ y\in\widetilde K\ \textnormal{such that}\ y_i\in\{0,1\}, 
$$
for all $t\geq 0$ and $i=1,\dots,9$.
\\

\begin{proposition}
The modified stochastic model (\ref{model2})
ensures that the gating variables $x_i$ take values within the interval $[0,1]$, for all $i=1,\dots,9$. This is valid for 
It\^o's and for Stratonovich's interpretation of the stochastic differential equations.  
\end{proposition}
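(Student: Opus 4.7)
The plan is to derive the proposition as a direct corollary of Theorem~\ref{mainB}. I would first cast the model (\ref{model2}) in the canonical form (\ref{stochastic}) by taking $m=11$, $r=9$, state vector $X=(x_1,\dots,x_9,V,C)^{T}$, drift vector $(f_1,\dots,f_9,F,G)^{T}$, and a diffusion matrix whose first nine rows carry the stochastic coefficients $g_i$ (each $g_i$ appearing in row $i$, column $i$) and whose last two rows vanish identically, since no Wiener terms appear in the equations for $V$ and $C$. Then I would apply Theorem~\ref{mainB} with the index set $I=\{1,\dots,9\}$, the bounds $a_i=0$ and $b_i=1$, and the rectangular set $\widetilde K$; note that $\widetilde K$ is unbounded in the $V$ and $C$ directions, which is permitted by the theorem because $I$ is allowed to be a proper subset of $\{1,\dots,m\}$.

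The verification of the three sufficient conditions of Theorem~\ref{mainB} is then routine. For $i\in\{1,\dots,8\}$, the reaction function $f_i(V,x_i)=\alpha_i(V)(1-x_i)-\beta_i(V)\,x_i$ yields $f_i(V,0)=\alpha_i(V)\geq 0$ and $f_i(V,1)=-\beta_i(V)\leq 0$, using that $\alpha_i$ and $\beta_i$ are positive; the same computation with $f_9(V,C,x_9)=\alpha_9(V,C)(1-x_9)-\beta_9(V,C)\,x_9$ handles the case $i=9$. The condition $g_i(t,y)=0$ on the faces $\{y_i=0\}\cup\{y_i=1\}$ of $\widetilde K$ is built into the very definition of the modified model (\ref{model2}). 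Since $I$ does not contain the indices $10$ and $11$, no conditions need be checked for the equations governing $V$ and $C$.

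There is no genuine obstacle: the argument is a direct application of Theorem~\ref{mainB}, and the assertion for Stratonovich's interpretation follows without modification since Theorem~\ref{mainB} is stated independently of which interpretation is used. The only point requiring minor care is the bookkeeping that translates (\ref{model2}) into the abstract setting of (\ref{stochastic}) — in particular, observing that rows of the diffusion matrix for indices outside $I$ are immaterial for the conclusion, and that the dependence of the reaction functions on $V$ (and, for $i=9$, also on $C$) does not affect the sign analysis on the boundary faces $\{x_i=0\}$ and $\{x_i=1\}$ because $\alpha_i$ and $\beta_i$ remain positive for all admissible values of their arguments.
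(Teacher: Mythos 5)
Your proposal is correct and follows exactly the route the paper takes: the paper's proof is a one-line appeal to Theorem~\ref{mainB}, noting that the $f_i$ and $g_i$ in the modified model satisfy the required boundary conditions, which is precisely what you verify in detail (the sign checks $f_i=\alpha_i\geq 0$ at $x_i=0$ and $f_i=-\beta_i\leq 0$ at $x_i=1$, the vanishing of $g_i$ on the faces by construction, and the irrelevance of the $V$ and $C$ components since $10,11\notin I$). Your write-up simply makes explicit the bookkeeping that the paper leaves implicit.
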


\begin{proof}
The statement is a direct consequence of Theorem \ref{mainB} since the interaction functions $f_i$ and 
stochastic perturbations $g_i$ in the governing equations for the gating variables $x_i$, $i=1,\dots, 9$, in the modified stochastic
model (\ref{model2}) satisfy the required conditions.
\end{proof}

One possible choice for the stochastic perturbations are functions of the form
$$
g_i(x_i)=\sigma_i x_i (1-x_i),
$$
with constants $\sigma_i\in\R^+$, $ i=1,\dots,9.$
\\

\section{Numerical Simulations}\label{simulations}

To illustrate our results we present numerical simulations for a simplified 
version of the stochastic model discussed in the previous section and 
consider a stochastic version of the classical Hodgkin-Huxley model \cite{hodgkin-huxley-1952}
Despite its simplicity, the deterministic Hodgkin-Huxley model has always been playing a 
very important role in the study of neuron excitability (\cite{Meunier}).
However, stochasticity should be included in the model to 
take into account the stochastic behavior of the ion channel kinetics
(see \cite{Meunier}, p.558 and p.559).

\begin{figure}[h]
        \begin{subfigure}[b]{0.4\textwidth}
                \centering
                \includegraphics[width=7cm]{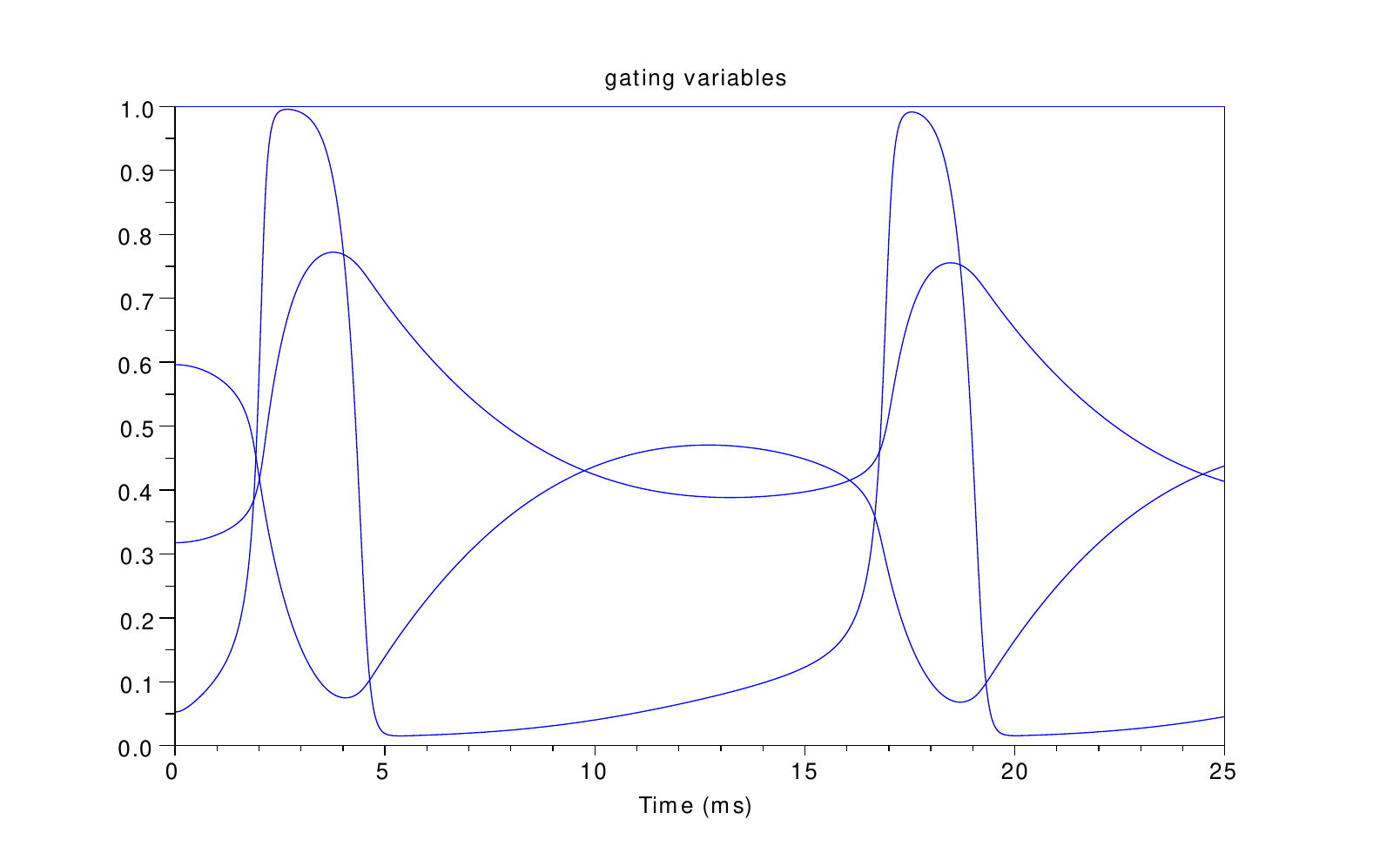}
                \caption{Gating Variables}
        \end{subfigure}%
        \quad
             \begin{subfigure}[b]{0.4\textwidth}
                \centering
                \includegraphics[width=7cm]{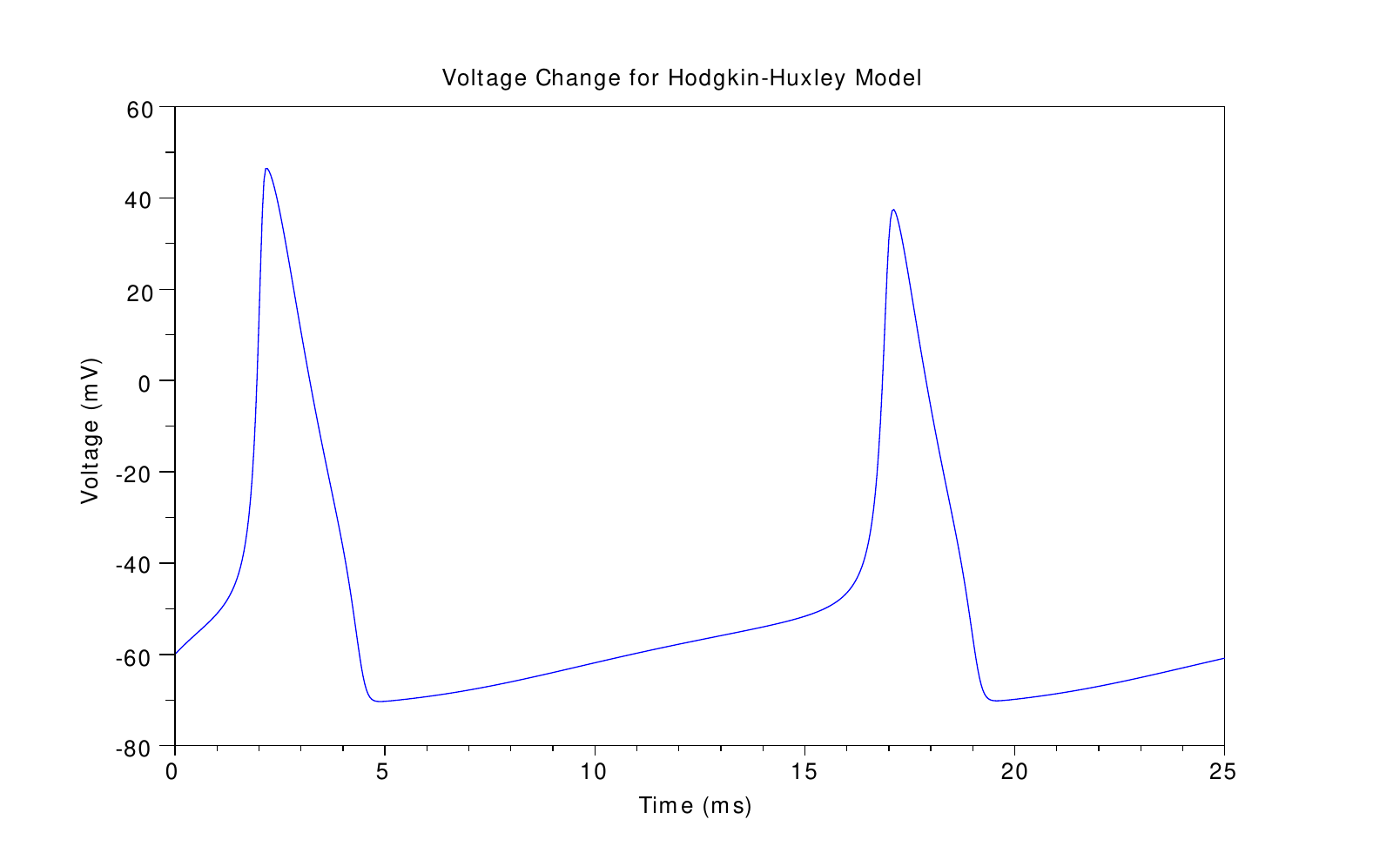}
                \caption{Voltage}
        \end{subfigure}                
        \caption{Deterministic Model}\label{fig1}
\end{figure}

The model we consider is  formulated as system of ordinary differential equations for the dependent model variables 
$x_i$, $i=1,2,3$, that represent the gating variables for the specific ion channels, and
the voltage $V$,
\begin{align}\label{model2}
\frac{dx_i}{dt} & = \alpha_i(V)(1-x_i)-\beta_i(V)x_i,\\
\frac{dV}{dt} & = \frac{1}{C}[I-g_{Na}x_1^3x_3(V-E_{Na})-g_K x_2^4(V-E_K)-g_L (V-E_L)]. \nonumber
\end{align}
The rate functions for activation and inactivation are given by
\begin{align*}
\alpha_1(V)&=\frac{0.1(V+35)}{1-\exp(-\frac{V+35}{10})},&&\beta_1(V)=4.0\exp(-0.0556(V+60)),\\
\alpha_2(V)&=\frac{0.01(V+50)}{1-\exp(-\frac{V+50}{10})},&&\beta_2(V)=0.125\exp(-\frac{V+60}{80}),\\
\alpha_3(V)&=0.07\exp(-0.05(V+60)),&&\beta_3(V)=\frac{1}{1+\exp(-0.1(V+30))},
\end{align*}
 and the parameter values by
 \begin{align*}
C & =0.01\frac{\mu F}{cm^2},      &&  g_{Na}=1.2\frac{mS}{cm^2},&&     g_K=0.36\frac{mS}{cm^2},  &&g_L=0.03\frac{mS}{cm^2}, \\
I&=0.1\ mV,    &&E_{Na}=55.17\ mV,      &&  E_K=-72.14\ mV,   &&    E_L=-49.42\ mV,
\end{align*}
(see \cite{hodgkin-huxley-1952}).

\begin{figure}[h]
           \begin{subfigure}[b]{0.4\textwidth}
                \centering
                \includegraphics[width=7cm]{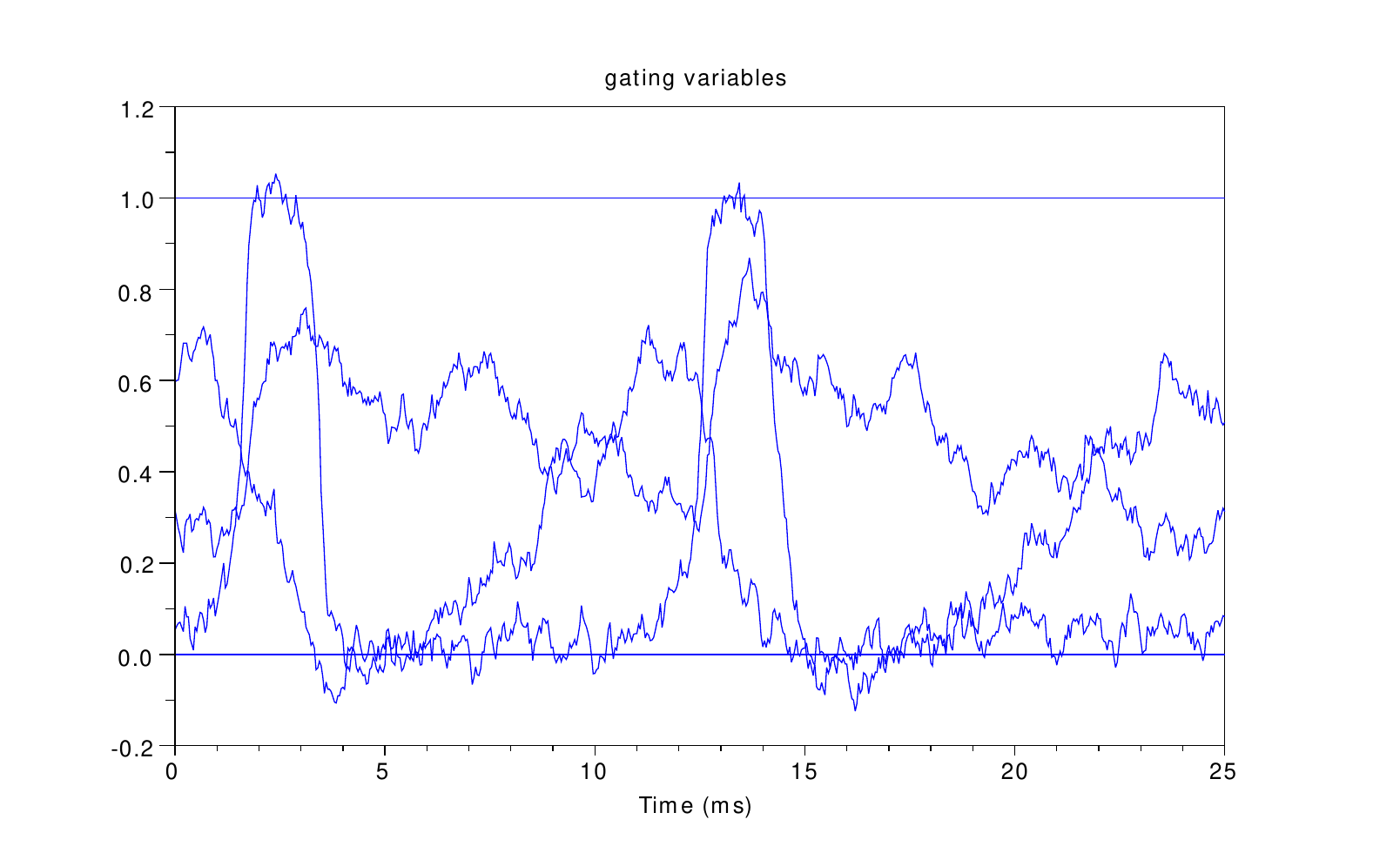}
                \caption{Gating Variables: $\sigma=0.1$}
                \end{subfigure}
                \quad
      \begin{subfigure}[b]{0.4\textwidth}
                \centering
                \includegraphics[width=7cm]{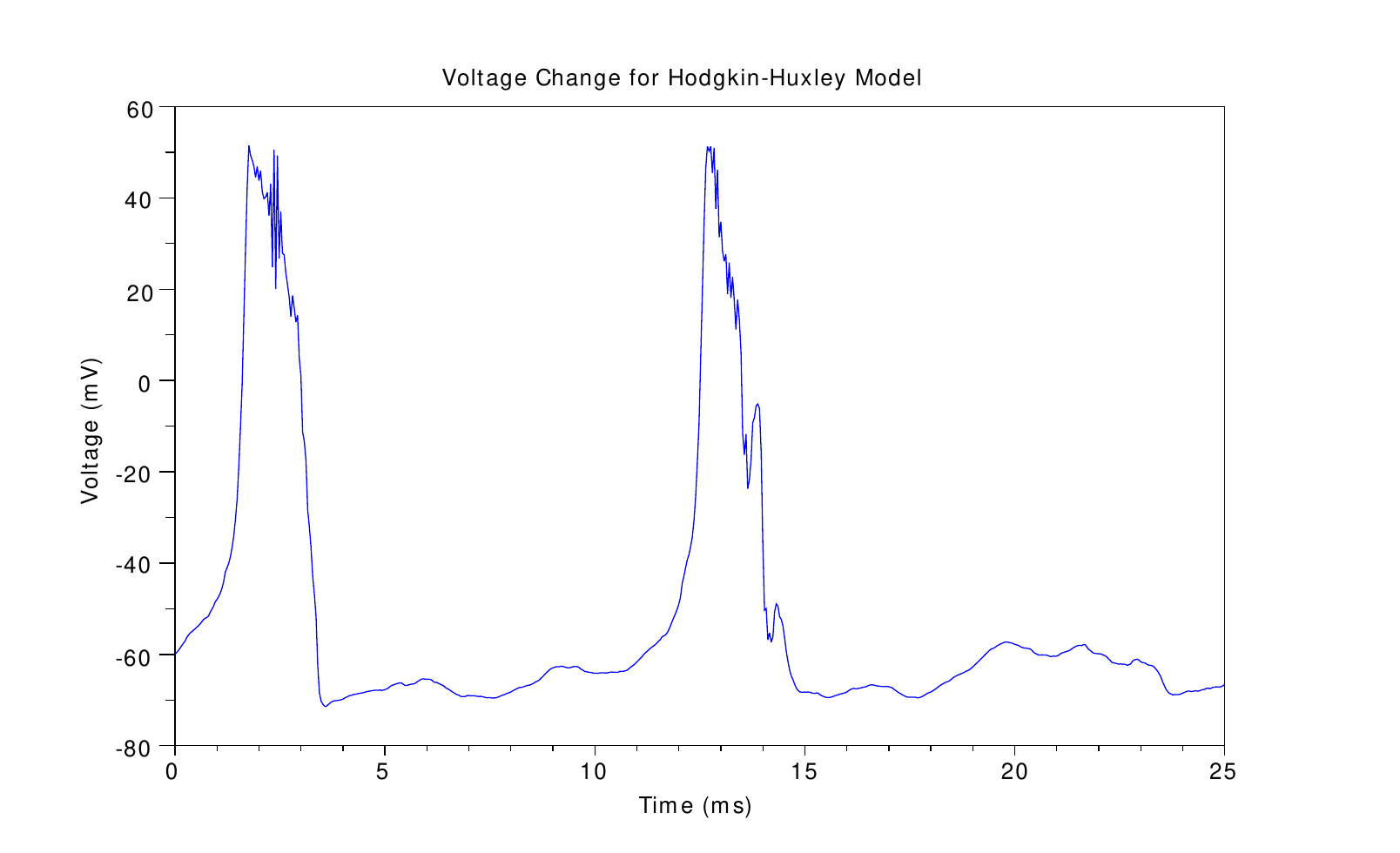}
                \caption{Voltage: $\sigma=0.1$}
        \end{subfigure}\\
                  \begin{subfigure}[b]{0.4\textwidth}
                \centering
                \includegraphics[width=7cm]{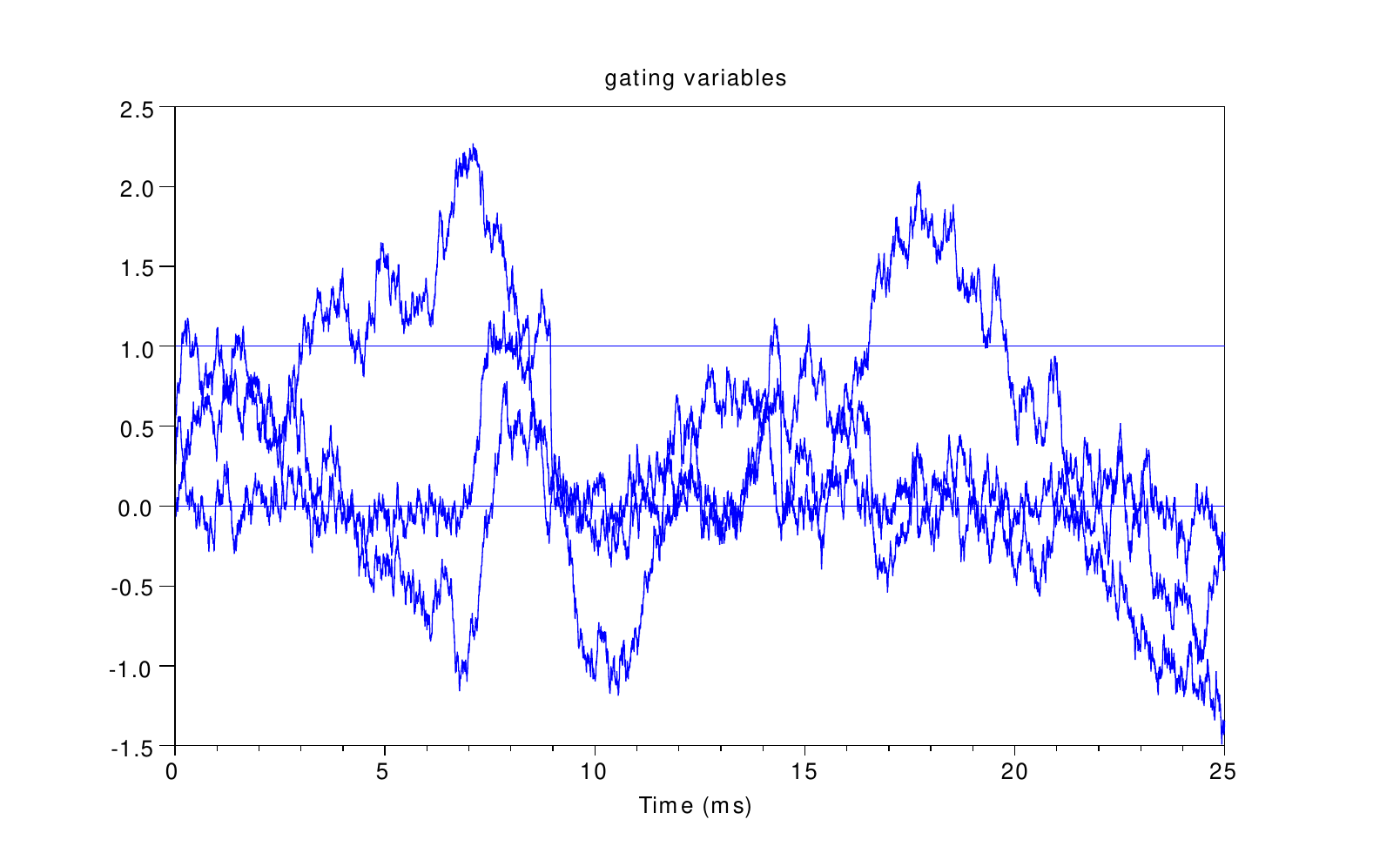}
                \caption{Gating Variables: $\sigma=0.5$}
        \end{subfigure}%
        \quad
            \begin{subfigure}[b]{0.4\textwidth}
                \centering
                \includegraphics[width=7cm]{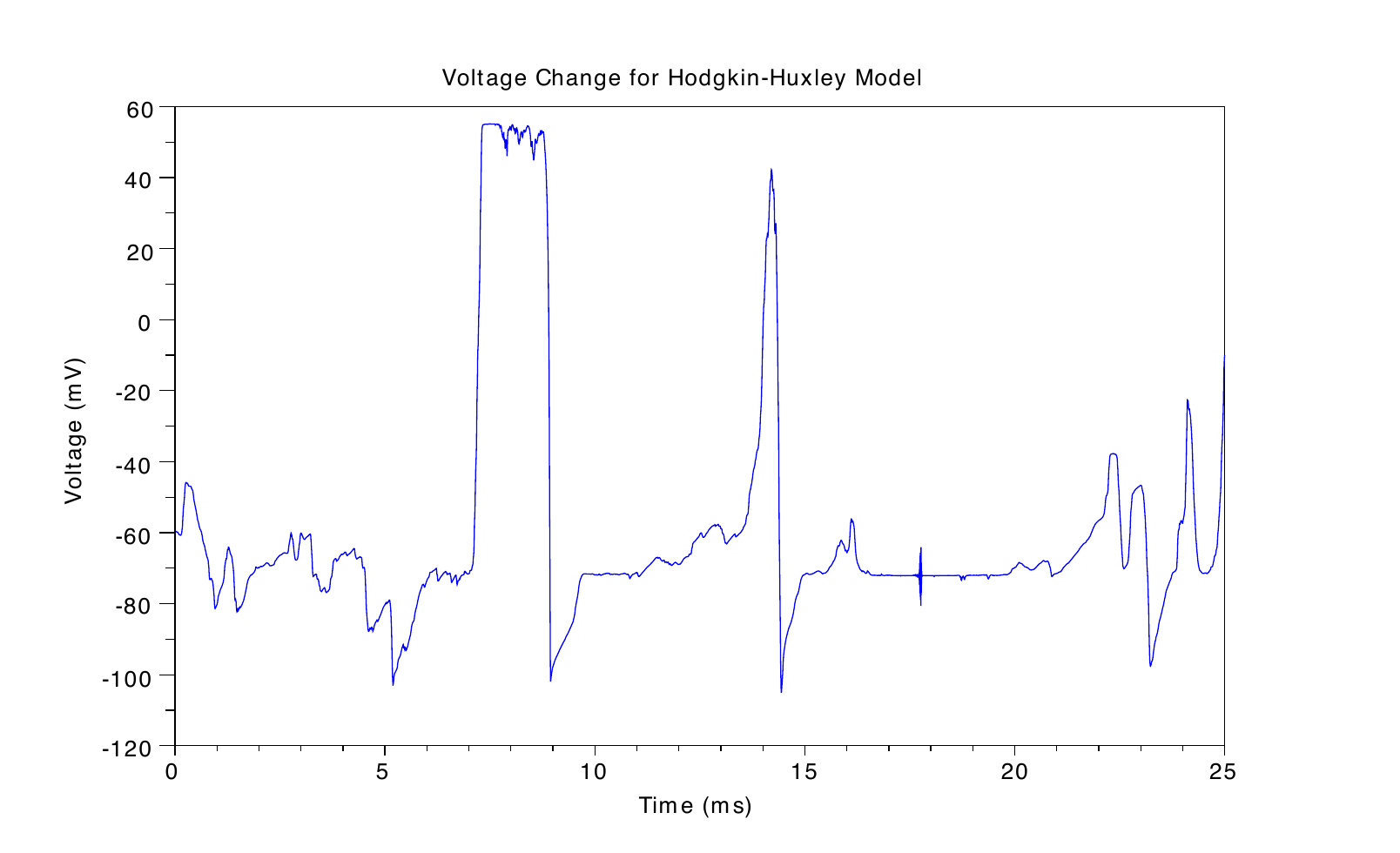}
                \caption{Voltage: $\sigma=0.5$}
                \end{subfigure}\\
                \caption{Stochastic Model: Additive Noise}\label{fig2}
\end{figure}

We illustrate the model behavior in Figure \ref{fig1}. 
The gating variables $x_i$, $i=1,2,3$, describe the opening and closing rates of the specific ion channels and necessarily take values 
within the interval $[0,1]$.
The deterministic model (\ref{model2}) certainly ensures this property.
\\

Following the modeling approach in \cite{saarinen2008}, we may
extend the deterministic model  by adding the 
stochastic terms $\sigma_i\,dW_i(t)$ in the governing equations for the 
gating variables $x_i$ in the model (\ref{model2}),
which leads to the system of stochastic differential equations 
\begin{align}\label{model3}
dx_i(t) & =   (\alpha_i(V(t))(1-x_i(t))-\beta_i(V(t))x_i(t)) dt +\sigma_i\, dW_{i}(t),\qquad i=1,2,3.
\end{align}
We interpret the stochastic differential equations 
in the sense of It\^o,
$W_i(t), t\geq 0,$ denote standard scalar Wiener processes, $dW_i$ the corresponding It\^o differentials,
and the parameters $\sigma_i$ are positive and constant, $i=1,\dots,3$.
\\

\begin{figure}[h]
        \begin{subfigure}[b]{0.4\textwidth}
                \centering
                \includegraphics[width=7cm]{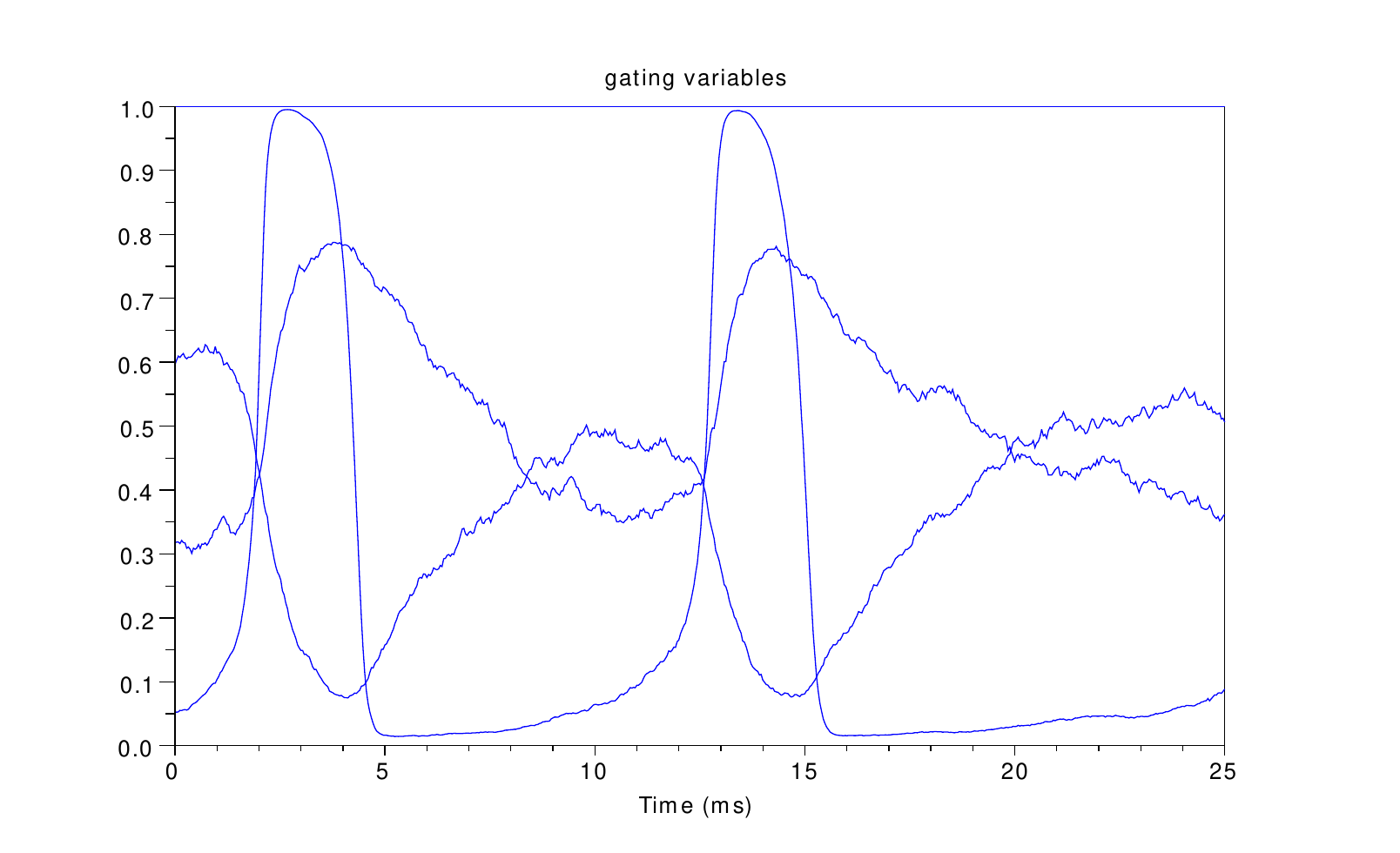}
                \caption{Gating Variables: $\sigma=0.1$}
        \end{subfigure}%
        \quad
        \begin{subfigure}[b]{0.4\textwidth}
                \centering
                \includegraphics[width=7cm]{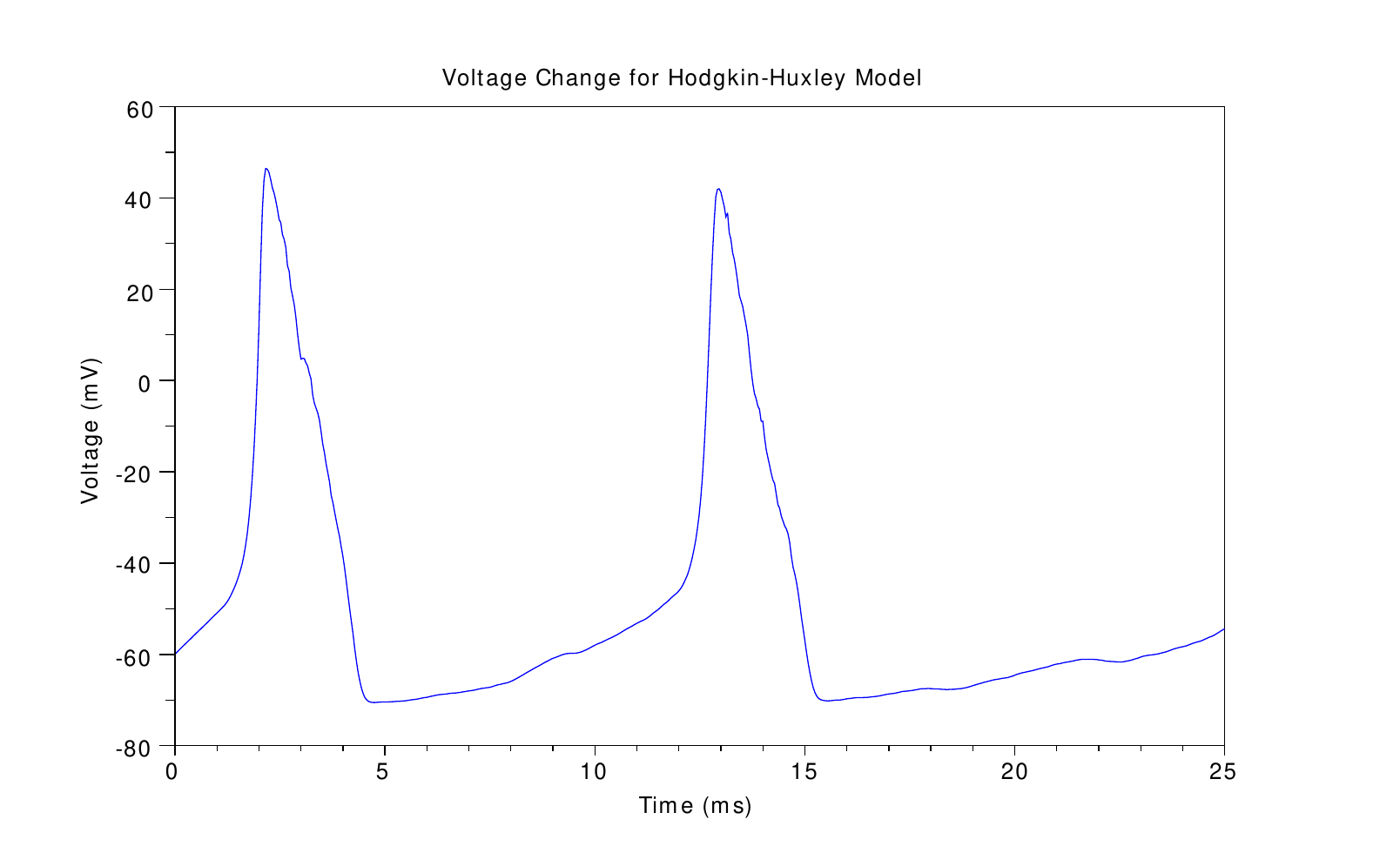}
                \caption{Voltage: $\sigma=0.1$}
        \end{subfigure}\\
     \begin{subfigure}[b]{0.4\textwidth}
                \centering
                \includegraphics[width=7cm]{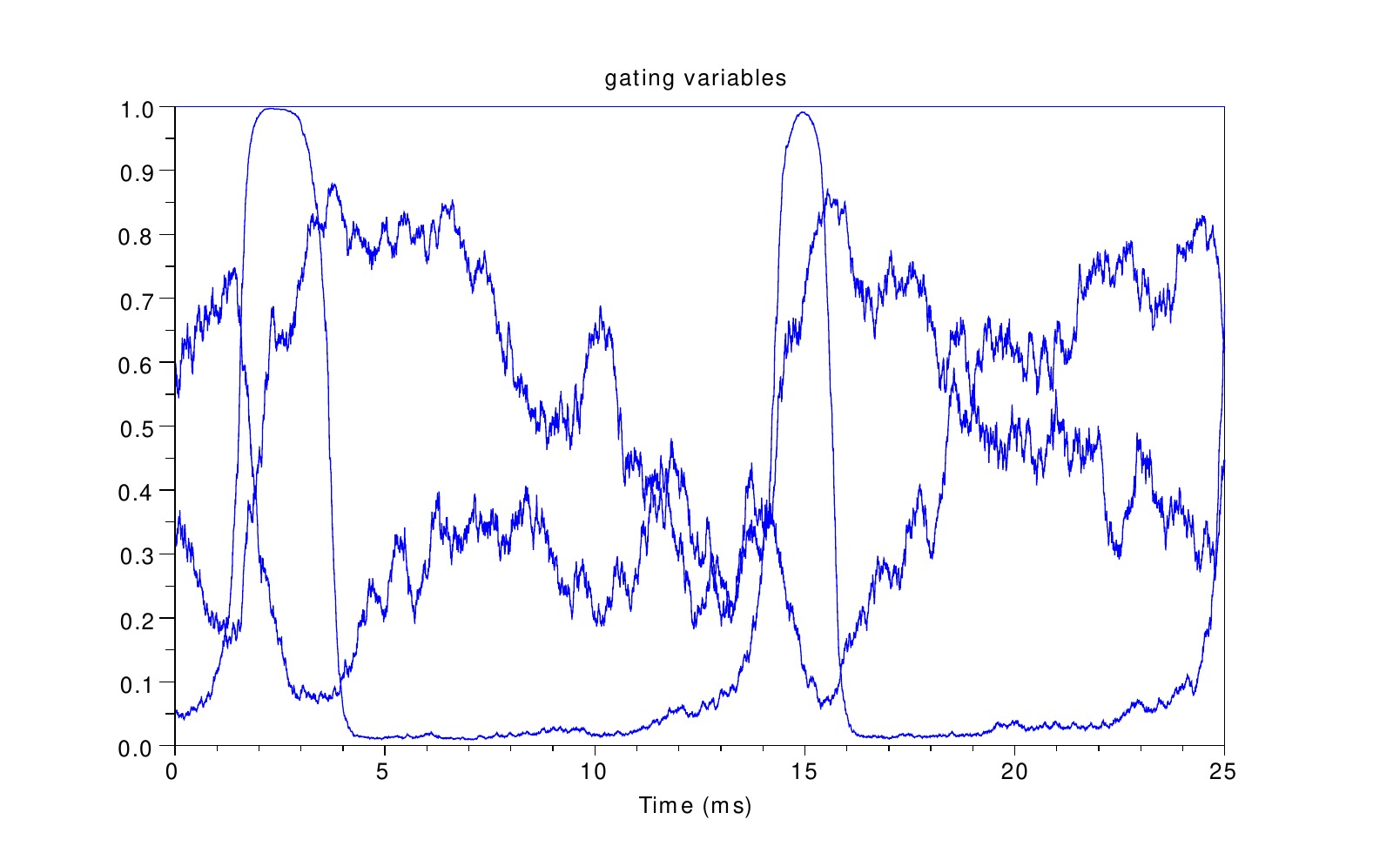}
                \caption{Gating Variables: $\sigma=0.5$}
                \end{subfigure}
    \quad
      \begin{subfigure}[b]{0.4\textwidth}
                \centering
                \includegraphics[width=7cm]{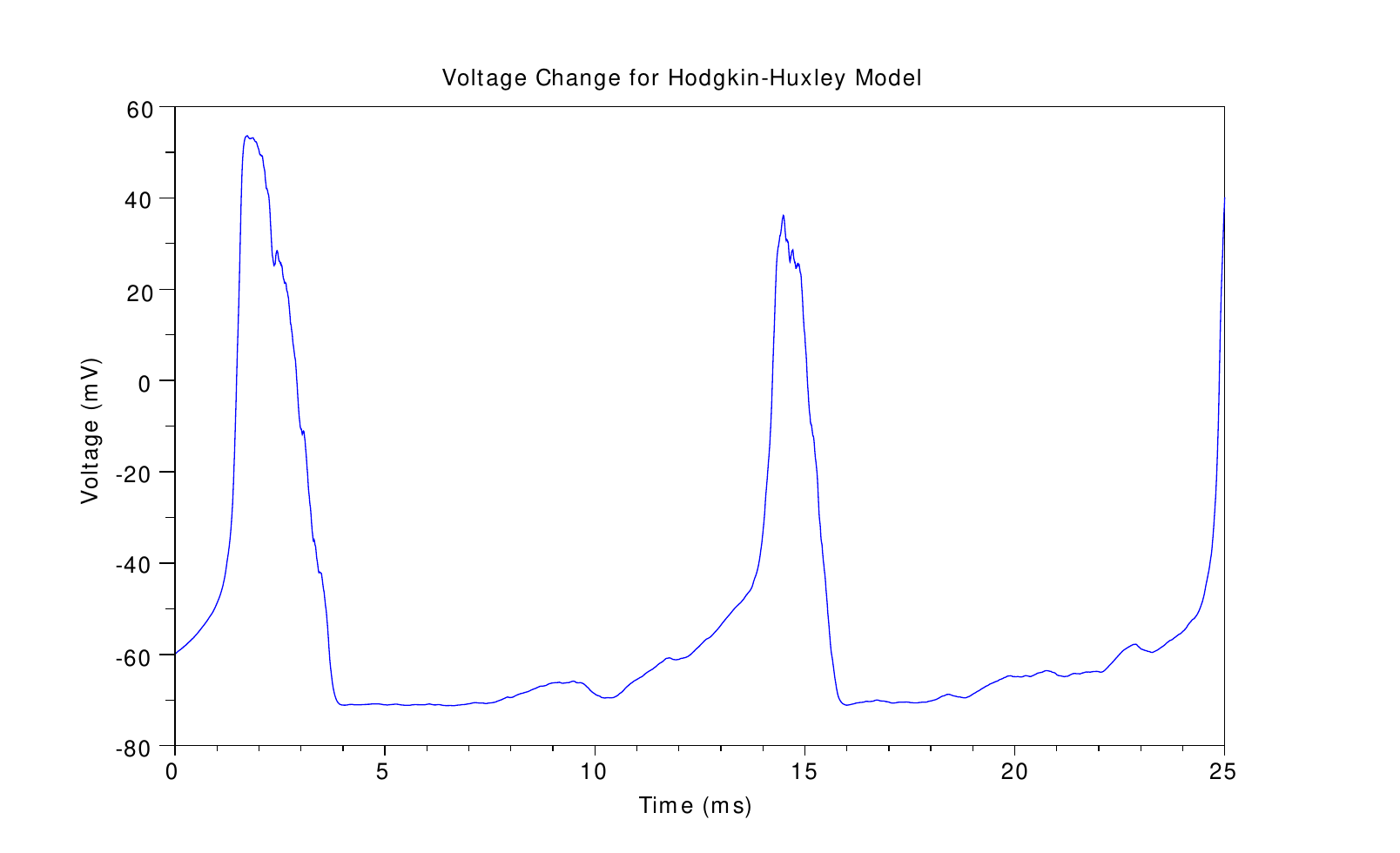}
                \caption{Voltage: $\sigma=0.5$}
        \end{subfigure}
\caption{Viable Stochastic Model: It\^o's Interpretation}\label{fig3}
\end{figure}

Our results in Section \ref{main-results} imply that the gating variables in the 
model (\ref{model3})
take undesired values outside of the unit interval. 
The simulations in Figure \ref{fig2}
illustrate this observation for different values of the parameter 
$\sigma=\sigma_i, i=1,2,3 $.  Here, we used the  
Euler-Maruyama method for the numerical implementation (see \cite{kloeden} and \cite{ruemelin}). 
We remark that It\^o's and Stratonovich's interpretation
yield the same solution for the stochastic model (\ref{model3}).
\\

\begin{figure}[h]
        \begin{subfigure}[b]{0.4\textwidth}
                \centering
                \includegraphics[width=7cm]{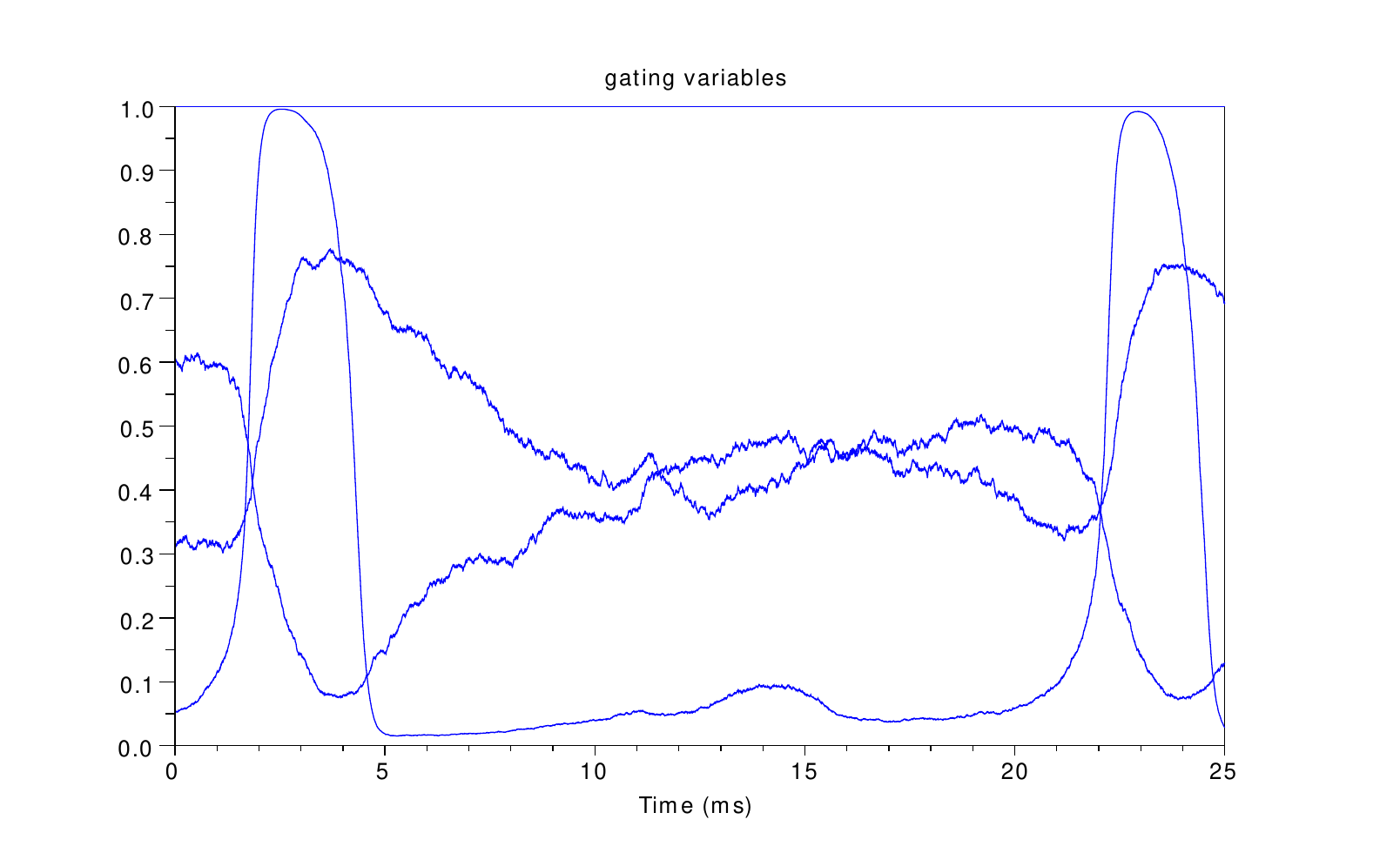}
                \caption{Gating Variables: $\sigma=0.1$}
        \end{subfigure}%
        \quad
             \begin{subfigure}[b]{0.4\textwidth}
                \centering
                \includegraphics[width=7cm]{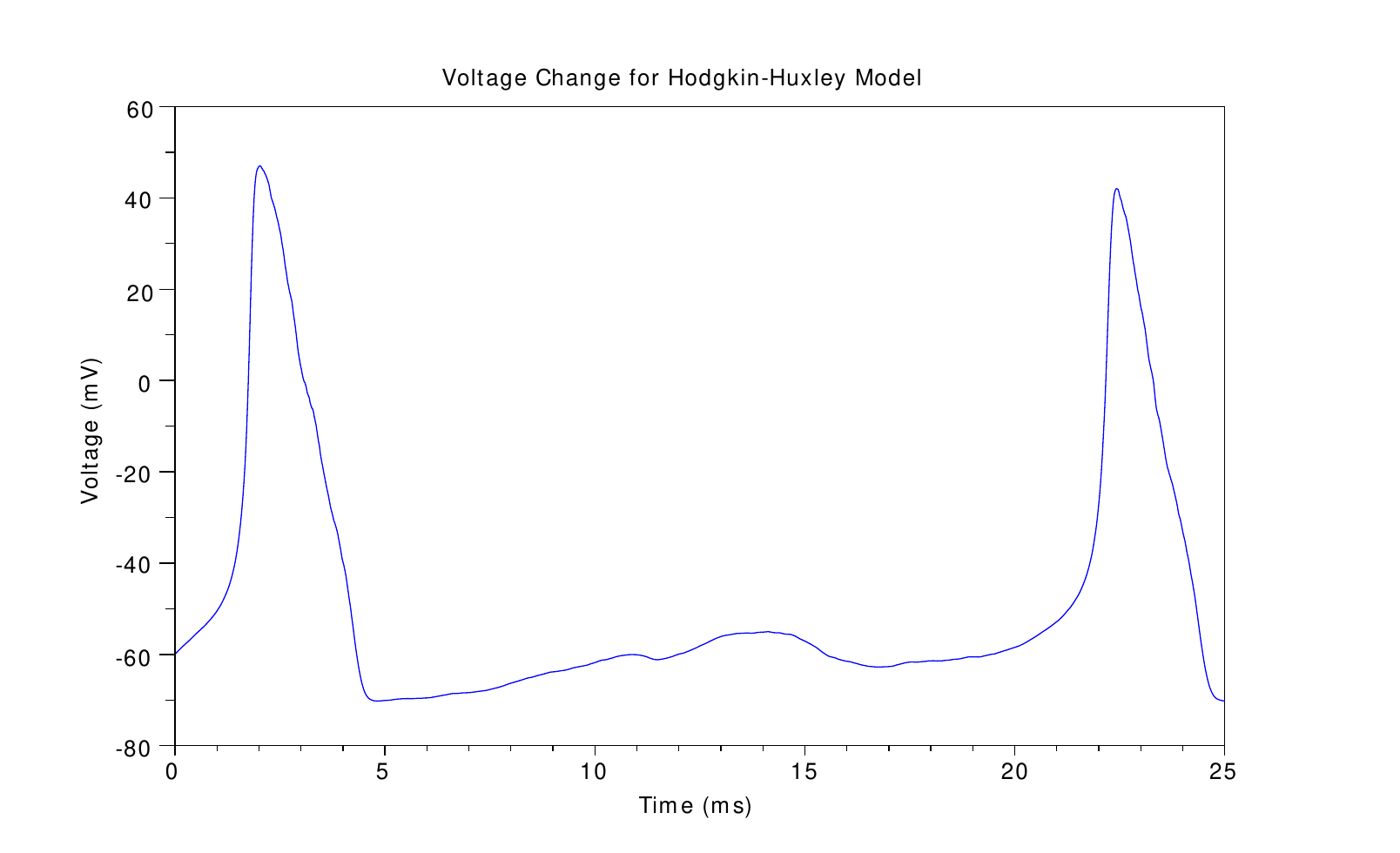}
                \caption{Voltage: $\sigma=0.1$}
        \end{subfigure}\\
     \begin{subfigure}[b]{0.4\textwidth}
                \centering
                \includegraphics[width=7cm]{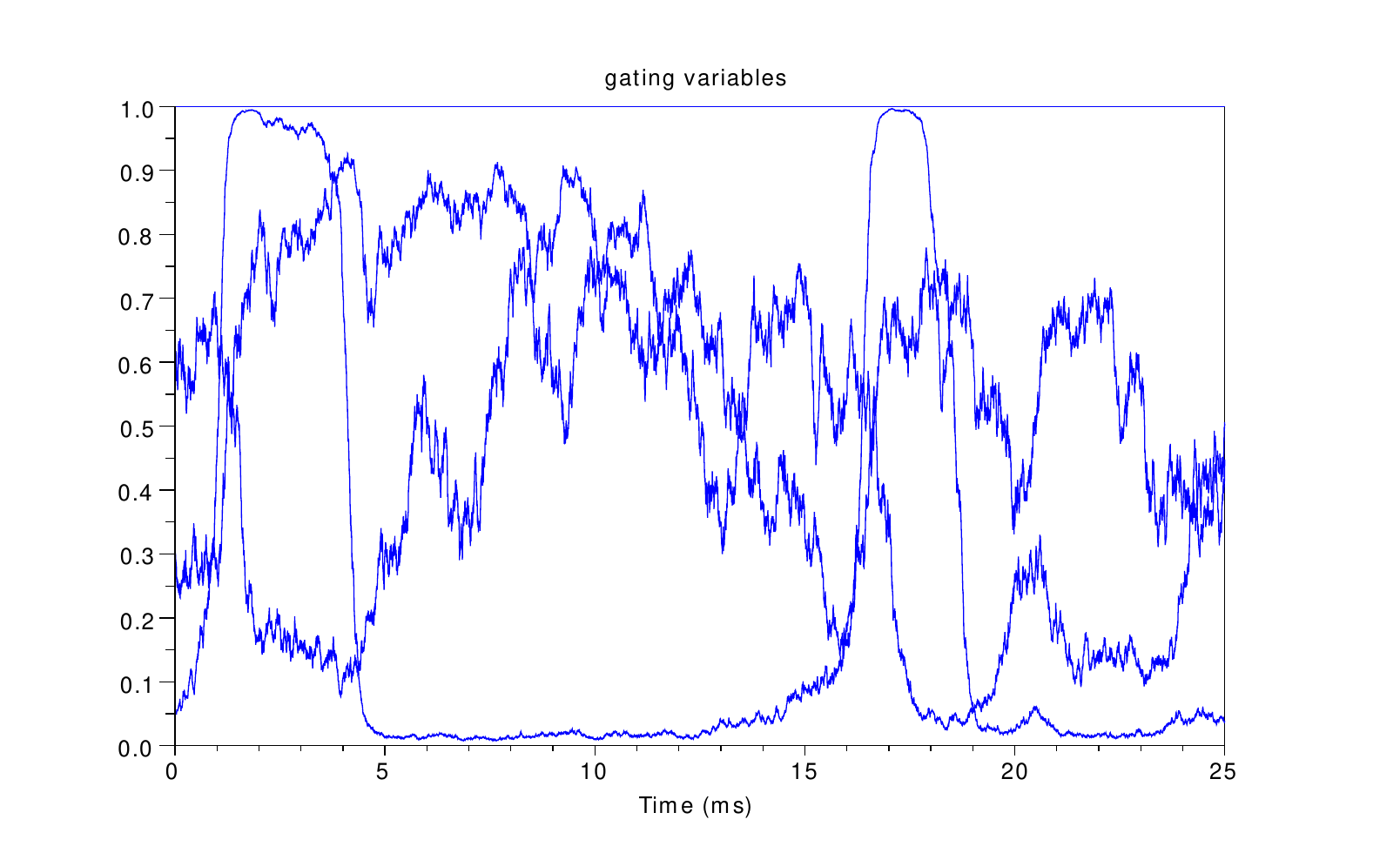}
                \caption{Gating Variables: $\sigma=0.5$}
                \end{subfigure}
    \quad
      \begin{subfigure}[b]{0.4\textwidth}
                \centering
                \includegraphics[width=7cm]{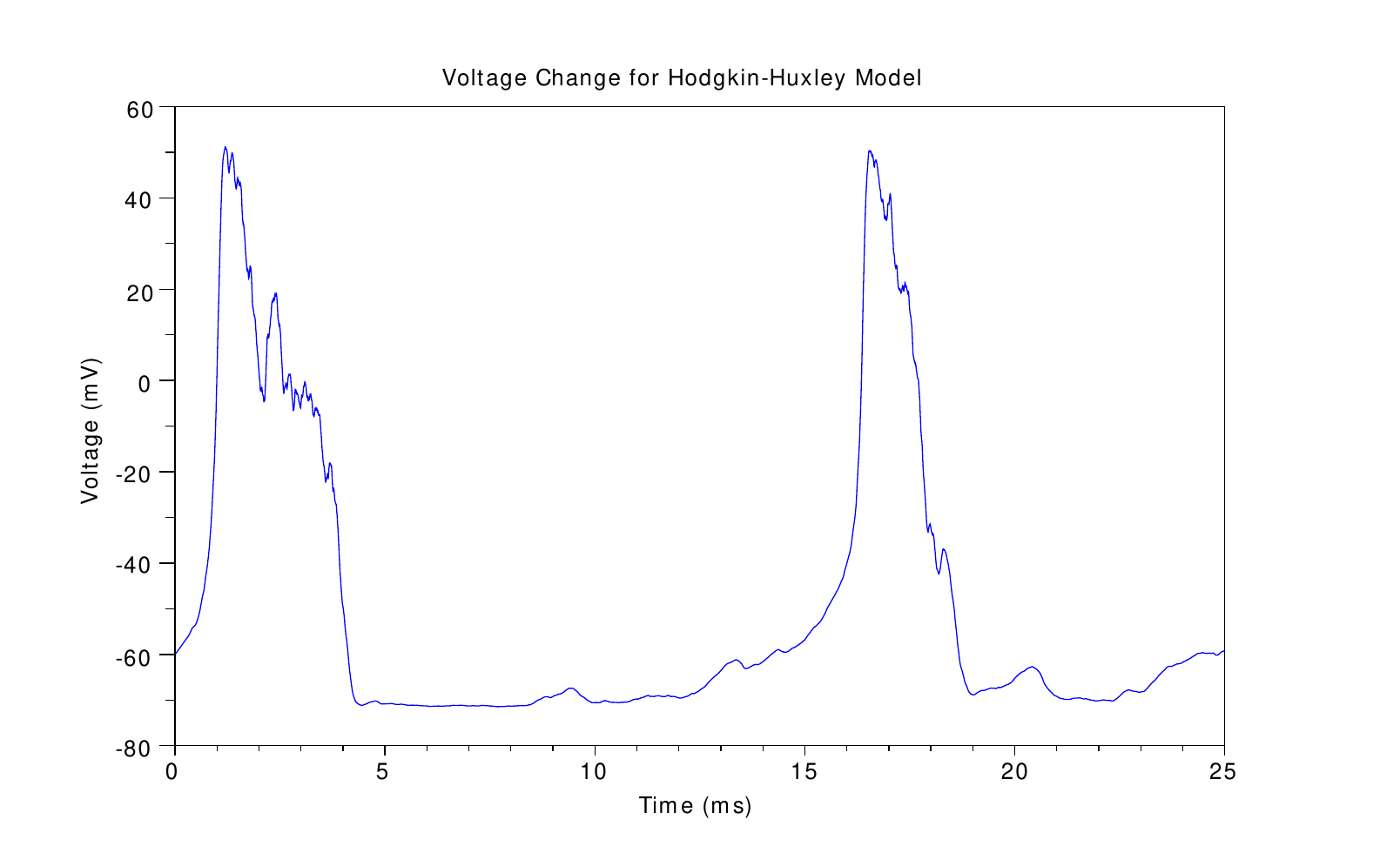}
                \caption{Voltage: $\sigma=0.5$}
        \end{subfigure}
\caption{Viable Stochastic Models: Stratonovich's Interpretation}\label{fig4}
\end{figure}

We obtain viable stochastic models if we
replace the constants $\sigma_i$ in the equations for the gating variables 
by appropriate functions $g_i$, $i=1,2,3$, that ensure the desired invariance of the unit interval.
To be more precise, we may consider stochastic models, where the determining equations 
for the gating variables $x_i$ are of the form
\begin{align}\label{model3}
dx_i(t) & =   (\alpha_i(V(t))(1-x_i(t))-\beta_i(V(t))x_i(t)) dt +g_i(t,x_1(t),x_2(t),x_3(t))\, dW_{i}(t),
\end{align}
for $i=1,2,3$, and the stochastic perturbations $g_i:[0,\infty[\times \R^3\rightarrow\R$ satisfy
\begin{align*}
g_i(t,x_1,x_2,x_3)=0\qquad\quad\textnormal{for}\  x_1,x_2,x_3\in[0,1]\ \textnormal{such that}\ x_i\in\{0,1\}, 
\end{align*}
for all $t\geq 0$ and $i=1,\dots,3$.
\\

Theorem \ref{mainB} in Section \ref{main-results} immediately implies that the gating variables in the 
modified stochastic model (\ref{model3}) take values within the interval $[0,1]$, and that it 
is valid for It\^o's and Stratonovich's interpretation of the stochastic differential equations.
\\

We illustrate the model behavior for stochastic perturbations
of the form
$$
g_i(t,x_1,x_2,x_3)=\sigma x_i (1-x_i) \qquad\ i=1,\dots,3, 
$$
where the constant $\sigma>0$.
The simulations in Figure \ref{fig3}
show the behavior of the solutions for It\^o's interpretation 
of the stochastic system (\ref{model3}), where we used
the Euler-Maruyama method for the numerical implementation.
Figure
\ref{fig4} illustrates the model behavior when we apply Stratonovich's interpretation.
In this case we applied the Euler-Heun method for the simulations (see \cite{kloeden} and \cite{ruemelin}).
\\

\section{Proof of the Theorems}\label{proofs}

Our proof is based on the main theorems obtained by A. Milian in \cite{milian-1995}.
We first recall her results, which are formulated  for systems of stochastic It\^o differential equations, 
and yield necessary and sufficient conditions for the stochastic viability of 
polyhedral subsets and the validity of comparison theorems.\\

\begin{definition}
A subset $K\subset \R^m$ is said to possess the \textbf{\textit{stochastic viability property}} with respect to 
the system $(f,g)$ if for every initial data $X_0\in K$ and every $t_0\geq 0$ there exists a global solution
of the initial value problem (\ref{stochastic}), and the solution satisfies
$$P\left(\left\{X(t)\in K,\ t\in[t_0,\infty[\right\}\right).
$$
\end{definition}
\vspace*{2ex}

For vectors $a,n\in\R^m$ we denote by 
$$H_{a,n}:=\{x\in \R^m,\  \langle x-a,n\rangle\geq 0\}
$$
the half-space determined by $a$ and $n$, where 
$\langle\cdot,\cdot\rangle$ is the scalar product in $\R^m$. A \textbf{\textit{polyhedron}} $K$ in $\R^m$ is a set of the form
$$K=\bigcap_{\nu\in I}H_{a_\nu,n_\nu},
$$
where $I=\{1,\dots,N\}\subset \N$ is a finite subset and $a_\nu,n_\nu\in\R^m$, $\nu\in I$.\\

For the proof of the following results we refer to \cite{milian-1995}.

\begin{theorem}\label{mil1}
Let $K=\bigcap_{\nu\in I}H_{a_\nu,n_\nu}$ be a polyhedron in $\R^m$ and suppose that the functions $f$ and $g$ satisfy the following conditions:
\begin{itemize}
\item[(a)] For every $T>0$ there exists a constant $C_T>0$ such that 
$$\|f(t,x)\|^2+\|g(t,x)\|^2\leq C_T(1+\|x\|^2)\qquad\textnormal{\emph{for all}}\ x\in K, t\in [0,T].
$$
\item[(b)] For every $T>0$ there exists a constant $\widetilde C_T>0$ such that 
$$\|f(t,x)-f(t,y)\|+\|g(t,x)-g(t,y)\|\leq \widetilde C_T\|x-y\| \qquad\textnormal{\emph{for all}}\ x,y\in K, t\in [0,T].
$$
\item[(c)] For every $x\in K$ the functions $f(\cdot, x)$ and $g(\cdot, x)$ are continuous on $[0,\infty[$.

\end{itemize}
Then, the set $K$ possesses the stochastic viability property with respect to the system $(f,g)$ if and only if for all $\nu\in I$ and $x\in K$
such that $\langle x-a_\nu,n_\nu\rangle=0$ we have
\begin{align*}
\,\langle f(x,t),n_\nu\rangle\,&\geq 0,\\
\langle g_j(x,t),n_\nu\rangle&= 0,\qquad  j=1,\dots,r,
\end{align*}
for all $t\geq 0$, where $g_j$ is the $j$-th column of the matrix $g=[g_{ij}]$.
\end{theorem}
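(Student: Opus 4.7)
My plan is to split the equivalence into necessity and sufficiency, reducing first to the case of a single half-space $H_{a_\nu,n_\nu}$ and then assembling the results over the finite index set $I$. It\^o's formula applied to the affine functional $\phi_\nu(x):=\langle x-a_\nu,n_\nu\rangle$ produces no quadratic correction and yields
$$d\phi_\nu(X(t))=\langle f(t,X(t)),n_\nu\rangle\,dt+\sum_{j=1}^{r}\langle g_j(t,X(t)),n_\nu\rangle\,dW_j(t),$$
so the question reduces to keeping a real-valued It\^o semimartingale non-negative whenever it starts at $0$, with necessity and sufficiency to be established for each index $\nu\in I$ separately.

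For necessity, fix $x_0\in K$ with $\phi_\nu(x_0)=0$ and start the process at $X_0=x_0$. If some component $\langle g_j(t_0,x_0),n_\nu\rangle$ is nonzero, continuity (hypothesis (c)) keeps the diffusion coefficient of $\phi_\nu(X(t))$ bounded away from zero on a space-time neighborhood of $(t_0,x_0)$; a Girsanov change of measure absorbs the drift and compares $\phi_\nu(X(t))$ locally to a nondegenerate Brownian motion starting at $0$, which crosses into the negative half-line with probability $1/2$ on arbitrarily short intervals, contradicting invariance. Once every $\langle g_j,n_\nu\rangle$ is known to vanish on $\{\phi_\nu=0\}$, the stochastic term contributes $o(\sqrt{t})$ near the boundary while the drift contributes a linear term, so the assumption $\langle f(t_0,x_0),n_\nu\rangle<0$ would force $\phi_\nu(X(t))$ to become strictly negative for small $t>0$, again a contradiction.

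For sufficiency I would run a penalty argument. Introduce smooth convex approximations $\psi^\varepsilon$ of $u\mapsto(u^-)^2$, with $(\psi^\varepsilon)'\leq 0$ vanishing on $\{u\geq\varepsilon\}$ and $0\leq(\psi^\varepsilon)''\leq C/\varepsilon$, apply It\^o's formula to $\psi^\varepsilon(\phi_\nu(X(t)))$, take expectations (justified by the linear-growth bound (a)), and use the Lipschitz extension of the hypothesis $\langle f,n_\nu\rangle\geq 0$ at $\phi_\nu=0$ to bound the drift contribution on $\{\phi_\nu\leq\varepsilon\}$. The dangerous second-order term $\tfrac{1}{2}(\psi^\varepsilon)''(\phi_\nu)\sum_j\langle g_j,n_\nu\rangle^2$ is controlled because hypothesis (b) combined with boundary vanishing forces $|\langle g_j(s,x),n_\nu\rangle|\leq C|\phi_\nu(x)|$ near the face, so its product with $(\psi^\varepsilon)''$ stays bounded uniformly in $\varepsilon$ and vanishes as $\varepsilon\to 0$. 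A Gronwall inequality then gives $\mathbb{E}\bigl[\bigl((\phi_\nu(X(t)))^{-}\bigr)^2\bigr]=0$ for every $t$, and path continuity combined with the finiteness of $I$ upgrades this to $P\bigl(\{X(t)\in K,\ t\geq t_0\}\bigr)=1$. The principal obstacle is precisely this second-order estimate: the pointwise boundary vanishing of $\langle g_j,n_\nu\rangle$ does not suffice on its own, and the Lipschitz hypothesis (b) is exactly what is needed to suppress the diffusion faster than $(\psi^\varepsilon)''$ diverges, which explains why regularity conditions apparently unrelated to the geometric boundary conditions are essential for the sufficiency direction.
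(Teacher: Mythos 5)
A preliminary point: the paper does not prove this statement at all. Theorem \ref{mil1} is Milian's viability theorem, recalled verbatim from \cite{milian-1995}; the text explicitly refers the reader to that paper for the proof and uses the result only as a black box from which Theorems \ref{mainA}--\ref{mainC} are derived. There is therefore no in-paper argument to compare yours against, and I can only assess your sketch on its own terms. The necessity half is the standard argument (a continuous semimartingale started at $0$ with nondegenerate martingale part takes negative values immediately; once the diffusion is killed on the face, a negative drift does the same) and is sound in outline, modulo the usual localization needed to apply Girsanov.

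The sufficiency half, however, has a genuine structural gap: you cannot reduce to a single half-space and then ``assemble over $I$'', because the hypotheses $\langle f,n_\nu\rangle\geq 0$ and $\langle g_j,n_\nu\rangle=0$ are imposed only at points of $K$ lying on the face $K\cap\{\phi_\nu=0\}$, not on the whole hyperplane $\{\phi_\nu=0\}$. An individual half-space $H_{a_\nu,n_\nu}$ need therefore not be invariant on its own (for the box of Theorem \ref{mainB}, $f_1\geq 0$ is required at $x_1=a_1$ only when the remaining coordinates are also in range), so proving ``each half-space is preserved'' face by face is not available; the penalization must treat $K$ as a whole, e.g.\ by applying It\^o's formula to a smoothing of $\operatorname{dist}(x,K)^2$ together with a Lipschitz extension of $f$ and $g$ off $K$ (conditions (a)--(c) are stated only on $K$, so some extension is needed anyway before the SDE can be run near, but outside, the set). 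Relatedly, your key estimate $|\langle g_j(s,x),n_\nu\rangle|\leq C\,|\phi_\nu(x)|$ requires projecting $x\in K$ onto the face \emph{within} $K$; for a general polyhedron the nearest point of the hyperplane need not lie in $K$, and one needs a Hoffman-type error bound for linear systems, not just hypothesis (b). Both defects are repairable by standard means, but as written the decomposition step would fail.
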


\begin{theorem}\label{mil2}
Let $I$ be a non-empty subset of $\{1,\dots,m\}$ and suppose that
for every $T>0$ there exists a constant $C_T>0$
such that
\begin{itemize}
\item[(a)]
\ \ $\|f(t,x)\|^2+\|g(t,x)\|^2\leq C_T(1+\|x\|^2)\qquad\textnormal{\emph{for all}}\ x\in \R^m, t\in [0,T].$\\[.03ex]
\item[(b)]\ \ 
$\|f(t,x)-f(t,y)\|+\|g(t,x)-g(t,y)\|\leq  C_T\|x-y\| \qquad\textnormal{\emph{for all}}\ x,y\in \R^m, t\in [0,T].$\\[.03ex]
\item[(c)]\ For every $x\in K$ the functions $f(\cdot, x)$ and $g(\cdot, x)$ are continuous on $[0,\infty[$.

\end{itemize} 
We assume that the functions
$\overline{f}$ and $\overline{g}$ satisfy the same conditions and denote the corresponding solutions 
of the stochastic  systems $(f,g)$ and $(\overline{f},\overline{g})$ by $X$ and $Y$.
Then, the following statements are equivalent:
\begin{itemize}

\item[(i)\,] For all $t_0\geq 0$, $X_0=((X_0)_1,\dots,(X_0)_m)\in\R^m$ and $Y_0=((Y_0)_1,\dots,(Y_0)_m)\in\R^m$ 
such that $(X_0)_i\geq(Y_0)_i, i\in I$, the corresponding solutions satisfy
$$P(\{X_i(t)\geq Y_i(t),\ i\in I, t\geq t_0\})=1.
$$

\item[(ii)] For all $i\in I$ the functions $f$ and $g$ satisfy 
\begin{eqnarray*}
f_i(t,x)&\geq& \overline{f}_i(t,y)\,\qquad \textnormal{\emph{for}}\ t\geq 0, \\
g_{ij}(t,x)&=&\overline{g}_{ij}(t,y)\qquad \textnormal{\emph{for}}\ t\geq 0,\, j=1,\dots,r,
\end{eqnarray*}
and all $x=(x_1,\dots,x_m)\in\R^m$, $y=(y_1,\dots,y_m)\in\R^m$
such that $x_k\geq y_k$, $k\in I$, $x_i=y_i$.
\end{itemize}

\end{theorem}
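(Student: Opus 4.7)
The plan is to reduce this comparison theorem to the stochastic viability result of Theorem \ref{mil1}, applied to the concatenated process $Z=(X,Y)\in\R^{2m}$. To do this, I would couple the two systems by driving them with the \emph{same} $r$-dimensional Wiener process $W$, so that $Z$ satisfies a stochastic differential equation of the form $(\ref{stochastic})$ on $\R^{2m}$ with drift $F(t,x,y)=(f(t,x),\overline{f}(t,y))\in\R^{2m}$ and diffusion $G(t,x,y)$ the $2m\times r$ matrix obtained by stacking $g(t,x)$ on top of $\overline{g}(t,y)$. A straightforward verification shows that $F$ and $G$ inherit the linear growth, Lipschitz and continuity hypotheses from those imposed on $(f,g)$ and $(\overline{f},\overline{g})$, so Theorem \ref{mil1} is applicable to the joint system on all of $\R^{2m}$.

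Next, I would encode statement (i) as the viability of the polyhedron
\begin{equation*}
K=\bigcap_{i\in I}\bigl\{(x,y)\in\R^{2m}:\langle(x,y),n_i\rangle\geq 0\bigr\},\qquad n_i=e_i^{(1)}-e_i^{(2)},
\end{equation*}
where $e_i^{(1)}$ and $e_i^{(2)}$ stand for the standard basis vectors in $\R^{2m}$ picking out the $i$-th coordinate of the $X$- and $Y$-blocks respectively. The assumption $(X_0)_i\geq (Y_0)_i$ for every $i\in I$ amounts to $Z(0)\in K$, and the event $\{X_i(t)\geq Y_i(t),\ t\geq t_0,\ i\in I\}$ coincides with the event that the trajectory of $Z$ remains in $K$. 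Hence (i) is equivalent to the stochastic viability of $K$ for $(F,G)$.

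I would then unfold the tangential conditions supplied by Theorem \ref{mil1}. A point $(x,y)\in K$ lies on the boundary hyperplane $\langle(x,y),n_i\rangle=0$ exactly when $x_i=y_i$, while membership in $K$ forces $x_k\geq y_k$ for every $k\in I$. Computing the relevant inner products produces
\begin{align*}
\langle F(t,x,y),n_i\rangle &=f_i(t,x)-\overline{f}_i(t,y),\\
\langle G_j(t,x,y),n_i\rangle &=g_{ij}(t,x)-\overline{g}_{ij}(t,y),
\end{align*}
so the tangential conditions for $K$ translate exactly into the inequalities and equalities listed in (ii). Combined with the preceding step this yields (i)$\Leftrightarrow$(ii).

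The main obstacle I anticipate is bookkeeping rather than analysis: one must check that the boundary structure of $K$ cut out by the normals $n_i$ produces precisely the family of configurations with $x_i=y_i$ and $x_k\geq y_k$ for $k\in I$ appearing in (ii), and that no spurious condition on components outside $I$ is introduced. A secondary subtlety, implicit in the almost sure formulation of (i), is that the two processes must be coupled through a common Wiener process for the pathwise ordering to be meaningful; this is built into the construction of $Z$ above.
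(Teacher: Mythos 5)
The paper itself offers no proof of Theorem \ref{mil2}: it is recalled from \cite{milian-1995} and the proof is explicitly deferred to that reference, so there is no internal argument to compare against. Your reduction is nevertheless correct and is essentially the route taken in Milian's original paper: coupling the two systems through a common Wiener process, forming the doubled system $(F,G)$ on $\R^{2m}$, and applying Theorem \ref{mil1} to the polyhedron determined by the normals $n_i=e_i^{(1)}-e_i^{(2)}$ translates the tangential conditions exactly into the inequalities and equalities of (ii). The one point worth making explicit is that under the global linear growth and Lipschitz hypotheses (a) and (b) the doubled system admits a unique global solution, so the stochastic viability of $K$ furnished by Theorem \ref{mil1} coincides with the invariance statement (i); the paper records the analogous remark immediately after Theorem \ref{mil2}.
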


We remark that the assumptions (a)-(c) in Theorem 
\ref{mil1} are imposed to guarantee the existence of solutions 
of the stochastic initial value problem $(f,g)$. The more restrictive hypothesis in 
Theorem \ref{mil2} imply the existence and uniqueness of solutions, and therefore, the
stochastic viability of a subset is equivalent to the stochastic invariance
with respect to the system $(f,g)$.
\\

We will deduce our criteria 
from Milian's result and show that they are valid 
independent of It\^o's and Statonovich's interpretation of  stochastic differential equations. 
In the sequel, we use 
the symbol $\circ\, dW(t)$ to indicate Stratonovich's interpretation.
For convenience of the reader we recall the 
general conversion formula for systems of stochastic differential equations, 
which relates both interpretations (see \cite{evans}, Section 6E):\\

If we interpret the stochastic system $(\ref{stochastic})$ in the sense of Stratonovich, 
that is, $X$ is the solution of the stochastic system
 $$dX(t)= f(t,X(t)) dt+g(t,X(t))\circ\,dW(t),
$$
then, $X$ solves the system of It\^o equations
$$dX(t)=\left[f(t,X(t))+\frac{1}{2}h(t,X(t))\right]dt+g(t,X(t))dW(t),
$$
where the function $h=[h_i]:[0,\infty[\times\R^m\rightarrow\R^m$ is given by
\begin{equation}\label{trans}
h_i(t,x)=\sum_{k=1}^r\sum_{j=1}^m\frac{\partial g_{ik}}{\partial x_j}(t,x)g_{jk}(t,x),\qquad i=1,\dots, m.
\end{equation}

\begin{proof}[Proof of Theorem 2]
Using polyhedral subsets of $\R^m$  the positive cone can be represented as 
$$K^+=\bigcap_{i=1}^mH_{0,e_i},
$$
where $0\in\R^m$ denotes the origin and $e_i, i=1,\dots,m$, the standard orthonormal basis vectors in  $\R^m$.
Since we a priori assume the existence and uniqueness of solutions of the stochastic initial value problem $(f,g)$, the stochastic viability 
of the positive cone $K^+$ is equivalent to its invariance with respect to the system $(f,g)$.
Evaluating the necessary and sufficient conditions formulated in Theorem \ref{mil1} immediately follows the
result for It\^o's interpretation of the stochastic system.
We need to show that the statement remains valid if we apply Stratonovich's interpretation of stochastic differential equations.
Let $X$ be a solution of the Stratonovich equation
$$dX(t)= f(t,X(t)) dt+g(t,X)\circ\,dW(t).
$$
Then, the transformation formula implies that $X$ solves the system of It\^o equations $(\hat f, g)$ with modified
interaction term $\hat f=f+\frac{1}{2}h$, where the function $h$ is defined by the formula (\ref{trans}).
We apply our previous result, which is valid for It\^o's interpretation, to the stochastic system $(\hat f,g)$ and conclude that 
the positive cone is an invariant subset if and only if 
\begin{eqnarray*}
\hat f_i (t,x) &\geq &0\qquad\quad x\in K^+\ \textnormal{such that}\ x_i=0,\\
g_{i,j} (t,x) &= &0 \qquad\quad  x\in K^+\ \textnormal{such that}\ x_i=0,\ j =1,\dots ,r,
\end{eqnarray*}
for all $t\geq 0$ and $i\in I,$. 
The conditions on the stochastic perturbations yield the representation
\begin{equation}\label{aux_g}
g_{i,j} (t,x) = x_i\int_0^1\frac{\partial g_{i,j}}{\partial x_i} (t,x_1,\dots,sx_i,\dots,x_m)ds \qquad\quad i\in I, =1,\dots,r,
\end{equation}
and it follows that the functions $f$ and $g$ satisfy the conditions in Theorem \ref{mainA} if 
and only if the functions $\hat f$ and $g$ fulfill these conditions.
This observation concludes the proof for Stratonovich's interpretation.
\end{proof}

\begin{proof}[Proof of Theorem 3]
We can represent the subset $K$ in Theorem \ref{mainB} as the finite intersection of 
polyhedral subsets
$$K=\bigcap_{i\in I}\big(H_{0,e_i}\cap H_{e_i,-e_i}\big).
$$
Computing explicitly the necessary and sufficient conditions for the invariance of the subset $K$
in Theorem \ref{mil1} follows the statement for the system of It\^o equations $(f,g)$.

To prove the result for Stratonovich's interpretation we 
use the explicit relation between both interpretations and the representation (\ref{aux_g})
for the stochastic perturbations in the final part of the proof of Theorem \ref{mainB}. 
This leads to the modified system of It\^o equations $(\hat f,g)$, for which necessary and sufficient 
conditions are known. We observe that
the conditions on the functions $f$ and $g$ are equivalent to the same conditions 
for the functions $\hat f$ and $g$, and are therefore invariant under the transformation
relating both interpretations.
\end{proof}

\begin{proof}[Proof of Theorem 4] The comparison theorem for It\^o's interpretation is 
valid by Theorem \ref{mil2}. To show the result
for Stratonovich's interpretation of stochastic differential equations we use the explicit 
transformation formula, which leads to the modified system of It\^o equations $(\hat f,g)$. 
We apply the known result for It\^o's interpretation
and observe that 
the conditions for the functions $\hat f$ and $g$ are equivalent to the conditions for the functions 
$f$ and $g$.
\end{proof}

\section*{Concluding Remarks}

We obtained necessary and sufficient conditions  for the invariance of rectangular subsets of the euclidean  space
under systems of stochastic differential 
equations and proved that 
the invariance property is independent of It\^o's and Stratonovich's interpretation.
In particular, we were able to characterize the class of stochastic perturbations 
that preserve the invariance property of the unperturbed deterministic system of
ODEs.
Such results are very relevant for applications and allow to validate stochastic models.

When not only temporal but also spatial properties are relevant,
the models are generally formulated as systems of stochastic PDEs. 
We are currently working on the extension of our invariance results for systems of parabolic PDEs under 
stochastic perturbations. 
A first result in this direction has been obtained in \cite{cresson-efendiev-sonner-2011}.
\\

\textbf{Acknowledgement:}\ The third author is funded by the ERC Advanced Grant FPT-246775 NUMERIWAVES.

\end{document}